\newcommand{\eee}{{\rm e}}
\newcommand{\ii}{{\rm{i}}}
\DeclareMathOperator{\Ima}{Im}
\DeclareMathOperator{\Ker}{Ker}
\theoremstyle{plain}
\newtheorem{theorem}{Theorem}[section]
\newtheorem{lemma}[theorem]{Lemma}
\newtheorem{corollary}[theorem]{Corollary}
\newtheorem{proposition}[theorem]{Proposition}
\theoremstyle{definition}
\theoremstyle{remark}
\newtheorem{remark}[theorem]{Remark}
\newcommand{\bB}{\mathbb{B}}
\newcommand{\cF}{\mathcal{F}}
\newcommand{\cQ}{\mathcal{Q}}
\newcommand{\cP}{\mathcal{P}}
\newcommand{\E}{\mathbb E\,}
\newcommand{\R}{\mathbb{R}}
\newcommand{\N}{\mathbb{N}}
\newcommand{\C}{\mathbb{C}}
\renewcommand{\P}{\mathbb{P}}
\newcommand{\Int}{\mathop{\mathrm{int}}\nolimits}
\newcommand{\relint}{\mathop{\mathrm{relint}}\nolimits}
\newcommand{\Vol}{\mathop{\mathrm{Vol}}\nolimits}
\newcommand{\conv}{\mathop{\mathrm{conv}}\nolimits}
\newcommand{\pos}{\mathop{\mathrm{pos}}\nolimits}
\newcommand{\lin}{\mathop{\mathrm{lin}}\nolimits}
\newcommand{\aff}{\mathop{\mathrm{aff}}\nolimits}
\newcommand{\rank}{\mathop{\mathrm{rank}}\nolimits}
\newcommand{\codim}{\mathop{\mathrm{codim}}\nolimits}
\newcommand{\eqdistr}{\stackrel{d}{=}}
\newcommand{\ind}{\mathbbm{1}}
\newcommand{\dd}{{\rm d}}
\newcommand{\stirling}[2]{\genfrac{[}{]}{0pt}{}{#1}{#2}}
\newcommand{\stirlingsec}[2]{\genfrac{\{}{\}}{0pt}{}{#1}{#2}}
\begin{document}

\author{Thomas Godland}
\address{Institut f\"ur Mathematische Stochastik,
Westf\"alische Wilhelms-Universit\"at M\"unster,
Orl\'eans-Ring 10,
48149 M\"unster, Germany}
\email{t\_godl01@uni-muenster.de}

\author{Zakhar Kabluchko}
\address{Zakhar Kabluchko: Institut f\"ur Mathematische Stochastik,
Westf\"alische {Wilhelms-Uni\-ver\-sit\"at} M\"unster,
Orl\'eans--Ring 10,
48149 M\"unster, Germany}
\email{zakhar.kabluchko@uni-muenster.de}

\author[D.~Zaporozhets]{Dmitry Zaporozhets}
\address{Dmitry Zaporozhets: St.~Petersburg Department of Steklov Institute of Mathematics, Fontanka~27, 191011 St.~Petersburg, Russia}
\email{zap1979@gmail.com}

\title[Angle sums of random polytopes]{Angle sums of random polytopes}
\keywords{Random polytopes, internal and external angles, angle sums, Grassmann angles, conic quermassintegrals, conic intrinsic volumes, Stirling numbers, angles of the regular simplex, Gaussian polytopes, convex hulls of random walks, $f$-vector}

\thanks{
The work of DZ and ZK has been supported by RFBR and
DFG according to the research project № 20-51-12004.
TG and ZK acknowledge support by the German Research Foundation under Germany's Excellence Strategy  EXC 2044 -- 390685587, Mathematics M\"unster: Dynamics-Geometry-Structure.}

\begin{abstract}
For two families of random polytopes we compute explicitly the expected sums of the conic intrinsic volumes and the Grassmann angles at all faces of any given dimension of the polytope under consideration. As special cases, we compute the expected  sums of internal and external angles at all faces of any fixed dimension.  The first family are the Gaussian polytopes defined as convex hulls of i.i.d.\ samples from a non-degenerate Gaussian distribution in $\mathbb R^d$.  The second family are convex hulls of random walks with exchangeable increments satisfying certain mild general position assumption. The expected sums are expressed in terms of the angles of the regular simplices and the Stirling numbers, respectively. There are non-trivial analogies between these two settings. Further, we compute the angle sums for Gaussian projections of arbitrary polyhedral sets, of which the Gaussian polytopes are a special case.   Also, we show that the expected Grassmann angle sums of a random polytope with a rotationally invariant law are invariant under affine transformations. Of independent interest may be also results on the faces of linear images of polyhedral sets. These results are well known but it seems that no detailed proofs can be found in the existing literature.
\end{abstract}

\subjclass[2010]{Primary: 52A22, 60D05.  Secondary: 52A55, 51F15}

\maketitle

\section{Introduction}
\subsection{Angles and face numbers}
For a convex  polytope $P\subset\R^d$ denote by $\cF(P)$ the set of its faces including $P$ itself. The classical Euler relation (see, e.g.,~\cite[Chapter~8]{bG03}) states that for every polytope $P$,
\begin{align}\label{916}
    \sum_{F\in\cF(P)}(-1)^{\dim F}=1.
\end{align}
A similar, although slightly less known result, exists for the internal solid angles of $P$. Let $\beta(F,P)$ denote the internal solid angle of $P$ at the face $F$. It can be defined as
$$
\beta(F, P)
:=
\lim_{r\downarrow 0} \frac{\Vol(\bB_r(z)\cap P) }{\Vol(\bB_r(z))},
$$
where $\Vol$ denotes the Lebesgue measure in $\R^d$, $\bB_r(z)$ is the $d$-dimensional ball with radius $r>0$ centered at $z$, and $z$ is any point in $F$ not belonging to a face of smaller dimension.
Then the following Gram--Euler relation holds:
\begin{align}\label{934}
    \sum_{F\in\cF(P)}(-1)^{\dim F}\beta(F,P)=0,
\end{align}
see~\cite{jG74} for $d=3$ and~\cite[\S14.1]{bG03} for arbitrary dimension.
For $d=2$, this relation reduces to a theorem from plane geometry stating that the angle-sum of any $n$-gon equals  $(n-2)\pi$.

Perles and Shephard~\cite[\S2]{PS67} found an elegant derivation of~\eqref{934} from~\eqref{916}. To this end,  they considered a random orthogonal projection $\Pi_{d-1}P$ of $P$ onto  a random $(d-1)$-dimensional hyperplane whose normal vector is uniformly distributed on the unit sphere in $\R^d$. They observed~\cite[Eq.~(8)]{PS67} that for all $j\in\{0,\ldots, d-1\}$,
\begin{align}\label{eq:sum_beta}
    \sum_{F\in\cF_j(P)} \beta(F,P) = \frac12 f_j(P)- \frac12 \E  f_j(\Pi_{d-1} P),
\end{align}
where $\cF_j(P)$ denotes the set of all $j$-dimensional faces of a polytope $P$, and  $f_j(P)= |\cF_j(P)|$ is their number.
Multiplying~\eqref{eq:sum_beta} by $(-1)^j$, taking the sum over all dimensions $j\in \{0,\ldots,d-1\}$, and making use of the Euler relation~\eqref{916} for $P$ and $\Pi_{d-1}P$, Perles and Shephard derived~\eqref{934}.

Shortly after that, Gr\"unbaum~\cite{bG68} generalized this approach to the so-called Grassmann angles (to be defined in Section~\ref{559}) and proved various linear relations for these angles.  Relating expected face numbers of the random projections  to the angles of the polytope  is a crucial step in the work of Affentranger and Schneider~\cite{AS92}. Later, similar ideas were also used in \cite{FK09,kabluchko_angles,beta_polytopes}.

\subsection{Outline of the paper}
Our goal is to apply the idea of Perles and Shephard to compute the expected sums of the angles for \emph{random} convex polytopes. We will consider two basic models: the \emph{Gaussian polytopes} (and, more generally, Gaussian projections of arbitrary polyhedral sets) and the \emph{convex hulls of random walks}. A detailed description of these models will be given in Section~\ref{2320} and Section~\ref{subsec:gauss_proj}. Necessary preliminaries are collected in Section~\ref{559}.  The expected number of the $j$-faces is known for both models, see~\cite{AS92} (combined with~\cite{BV94}) for the Gaussian polytopes and~\cite{KVZ17} for the convex hulls of random walks. Moreover, these models possess the common important property: the random projection of the polytope from the model has the same distribution as the same model of lower dimension. This makes it possible to use the approach of Perles and Shephard to compute the expected sums of the angles for these models. Furthermore, like in~\cite{bG68}, we will also generalize these results to sums of Grassmann angles which include both internal and external angles as special cases. The main results and their proofs are collected in Sections~\ref{1025} and~\ref{631}.

\section{Convex cones and Grassmann angles}\label{559}
In this section we collect some necessary definitions from stochastic and convex geometry. The reader may skip this section  and return to it when necessary.

\subsection{Notation}
For a set $M\subset\R^d$ denote by $\lin  M$ (respectively, $\aff M)$ its linear (respectively, affine) hull, that is, the minimal linear (respectively, affine) subspace containing $M$.
Equivalently,  $\lin  M$ (respectively, $\aff M)$ is the set of all linear (respectively, affine) combinations of elements of $M$. The interior of $M$ will be denoted by $\Int M$.
We write $\relint M$ for the relative interior of $M$ which is the interior of $M$ taken with respect to its affine hull $\aff M$. The dimension of  a convex set $M$, denoted by $\dim M$, is  the dimension of $\aff M$.

For an arbitrary   set  $M\subset \R^d$ let $\pos  M$ denote its  \emph{positive} (or \emph{conic}) \emph{hull}:
\[
\pos   M: =\Big\{\sum_{i=1}^m\lambda_i t_i:\, m\in \N,\,  t_1, \ldots,t_m\in M,\, \lambda_1,\ldots,\lambda_m\geq 0\Big\}.
\]

\subsection{Grassmann angles}
A set $C\subset\R^d$ is called a \emph{polyhedral cone} if it can be represented as a positive hull of finitely many vectors. Equivalently, a polyhedral cone is an intersection of finitely many half-spaces whose boundaries pass through the origin.
The \emph{solid angle} of a polyhedral cone $C\subset\R^d$ is defined as
\begin{equation}\label{933}
\alpha(C):=\P[Z\in C],
\end{equation}
where $Z$  is uniformly distributed on the centered unit sphere in the linear hull $\lin C$. The maximal possible value of the solid angle in this normalization is $\alpha(C)=1$ and attained if $C$ is a linear subspace. If the dimension of $C$ is $d$ but  $C\ne\R^d$, then $\P[Z\in C,-Z\in C]=0$ and denoting the random line passing through $Z$ and $-Z$ by $W_1$, we obtain that~\eqref{933} is equivalent to
\begin{equation}\label{1304}
\alpha(C) = \frac12\P[W_1\cap C\ne\{0\}].
\end{equation}

This definition of the solid angle can be generalized as follows.  Fix some $k\in \{0,\ldots,d\}$.  Let $W_{d-k}$ be a random $(d-k)$-dimensional linear subspace having the uniform distribution on the Grassmannn manifold of all such subspaces.
Following Gr\"unbaum~\cite{bG68}  define (with the inverse index order) the $k$-th \emph{Grassmann angle} of $C$ as the probability that  $C$ is intersected by the random, uniform $(d-k)$-plane $W_{d-k}$ non-trivially:
\begin{equation}\label{1138}
\gamma_k(C):=\P[W_{d-k}\cap C\ne\{0\}], \quad k\in \{0,\ldots,d\}.
\end{equation}
For example, taking $k=d-1$  and assuming that the dimension of $C$ is $d$, we have
\begin{align}\label{739}
  \alpha(C)=\frac 1 2\gamma_{d-1}(C)+\frac 1 2\ind[C=\R^d].
\end{align}
It follows from~\eqref{1138} that for any convex cone $C\subset\R^d$ with $C\neq \{0\}$,
$$
1= \gamma_0(C) \geq \gamma_1(C) \geq \ldots \geq \gamma_{d}(C) = 0.
$$
The \textit{lineality space} of a polyhedral cone $C$, defined as $C\cap (-C)$, is the maximal linear subspace contained in $C$. If the lineality space of $C$ has dimension $j\in \{0,\ldots,d-1\}$ and $C$ is not a linear subspace, then~\eqref{1138} even implies that
\begin{equation}\label{eq:grassmann_angles_lineality}
1= \gamma_0(C) =\ldots =  \gamma_{j}(C) \geq \gamma_{j+1}(C)  \geq \ldots \geq \gamma_{d}(C) = 0.
\end{equation}
On the other hand, it follows directly from~\eqref{1138} that for a $j$-dimensional linear subspace  $L_j\subset\R^d$ with $j\in \{0,\ldots,d\}$ we have
\begin{equation}\label{2256}
\gamma_k(L_j)=
\begin{cases}
1,\quad \text{ if } 0\leq k\leq j-1,\\
0,\quad \text{ if } j\leq k\leq d.
\end{cases}
\end{equation}
If $C$ is not a linear subspace, then the quantity $\frac 12 \gamma_k(C)$ is also known as the \textit{$k$-th  conical quermassintegral $U_k(C)$} of $C$; see~\cite[Eqs.~(1)--(4)]{HugSchneider2016} or as the  \emph{half-tail functional} $h_{k+1}(C)$ defined in~\cite{amelunxen_edge}.

It was shown in~\cite[Eq.~(2.5)]{bG68}) that, as with the classical intrinsic volumes, the Grassmann angles do not depend on dimension of the ambient space: If we embed $C$ in $\R^N$ with $N\geq d$, the result will be the same. In particular, it is convenient to define
\[
\gamma_N(C):=0\quad\text{for all} \quad N\geq\dim C.
\]


\subsection{Angles of polyhedral sets}
A \textit{polyhedral set} is an intersection of finitely many closed half-spaces (whose boundaries need not pass through the origin).
If a polyhedral set is bounded, it is a polytope. Polyhedral cones are also special cases of polyhedral sets.
Denote by $\cF_j(P)$ the set of $j$-dimensional faces of a polyhedral set $P\subset \R^d$. The \textit{tangent cone} at a face $F\in \cF_j(P)$ is defined by
\begin{equation}\label{eq:def_tangent_cone}
T_F(P) = \{v\in\R^d\colon f_0 +\varepsilon v \in P \text{ for some } \varepsilon>0\},
\end{equation}
where $f_0$ is any point in the relative interior of $F$.  
The \textit{normal cone} at the face $F\in \cF_j(P)$ is defined as the polar of the tangent one, that is
\begin{equation}\label{eq:def_normal_cone}
N_F(P) = T_F^\circ (P) = \{w\in\R^d \colon \langle w, u\rangle\leq 0 \text{ for all } u\in T_F(P)\}.
\end{equation}
The \textit{internal angle} of $P$ at $F$ is defined as the solid angle of its tangent cone:
$$
\beta(F,P) := \alpha(T_F(P)).
$$
The \textit{external angle} of $P$ at $F$ is the solid angle of its normal cone
$$
\gamma(F,P) := \alpha (N_F(P)).
$$

\section{Two models of random polytopes}\label{2320}
\subsection{Gaussian polytopes}\label{2320a}
Let $X_1,\ldots, X_n$ be independent $d$-dimensional standard  Gaussian random vectors. Their convex hull
\begin{align}\label{2224}
    \cP_{n,d}:=\conv(X_1,\ldots,X_n)
\end{align}
is called the Gaussian polytope. Most of the time, it will be convenient to impose the assumption  $n\geq d+1$ which guarantees that $\cP_{n,d}$ has full dimension $d$ a.s. Fix some $j\in\{0,\ldots,d-1\}$. An exact formula for the expected number of $j$-dimensional faces of $\cP_{n,d}$ can be obtained by combining the results of~\citet{AS92} and~\citet{BV94}. To state this formula, we need to introduce some notation. Let $e_1,\ldots,e_n$ be the standard orthonormal basis in $\R^n$. The internal, respectively, external,  angle sums  of the regular $n$-vertex simplex $\Delta_n:=\conv(e_1,\ldots,e_n)$ at its $k$-vertex faces are denoted by
$$
\sigma\stirlingsec{n}{k},
\;\;\;
\text{respectively,}
\;\;\;
\sigma\stirling{n}{k}.
$$
This notation is intentionally chosen to resemble the standard notation for Stirling numbers~\cite[\S6.1]{Graham1994}; the analogy between these notions will be discussed below. Since the number of $k$-vertex faces of $\Delta_n$ equals $\binom nk$ and since the angles at all such faces are equal, we can choose one $k$-vertex face, say $\Delta_k:=\conv(e_1,\ldots,e_k)$, and define
$$
\sigma\stirlingsec{n}{k}:= \binom nk \cdot \alpha (T_{\Delta_k} (\Delta_n)),
\qquad
\sigma\stirling{n}{k}:=  \binom nk \cdot \alpha (N_{\Delta_k} (\Delta_n)),
$$
for all $n\in \N$ and all $k\in \{1,\ldots,n\}$. Here, $T_{\Delta_k} (\Delta_n)$,
respectively $N_{\Delta_k} (\Delta_n)$, denotes that tangent (respectively, normal) cone of $\Delta_n$ at $\Delta_k$,  while $\alpha(C)$ is the solid angle of a cone $C$; see Section~\ref{559}. It is convenient to extend the above definition by putting
$$
\sigma\stirlingsec{n}{k}
:=
\sigma\stirling{n}{k}
:=
0,
$$
for all $n\in \N$ and all $k\notin \{1,\ldots,n\}$.    With this notation, the formula of~\citet{AS92} (taking into account also the observation of~\citet{BV94}) takes the form
\begin{equation}\label{eq:E_f_k_P_n_d}
\E f_j(\cP_{n,d})
=
2\sum_{l=0}^{\infty}  \sigma\stirling{n}{d-2l} \sigma\stirlingsec{d-2l}{j+1},
\end{equation}
for all $j\in \{0,\ldots,d-1\}$.
In fact, \citet{AS92} proved the same formula for the expected number of $j$-dimensional faces of the projection of the simplex $\conv(e_1,\ldots,e_n)$ onto a uniform, random $d$-dimensional subspace in $\R^n$. Then, \citet{BV94} argued that this expected number of faces is the same as for the Gaussian polytope.

Explicit formulas for $\sigma\stirlingsec{n}{k}$ and  $\sigma\stirling{n}{k}$ are available; see~\cite{KZ17a} for a review of this topic. For example, it is known that
\begin{align}
\sigma\stirlingsec{n}{k}
&=
\binom nk \cdot \frac 1 {\sqrt {2\pi}} \int_{0}^{\infty} \left(\Phi^{n-k} \left( \frac{\ii x}{\sqrt n}\right) + \Phi^{n-k} \left(- \frac{\ii x}{\sqrt n}\right)\right) \eee^{-x^2/2} \dd x
,\label{eq:regular_simpl_internal}\\
\sigma\stirling{n}{k}
&=
\binom nk \cdot \frac 1 {\sqrt {2\pi}} \int_{0}^{\infty} \left(\Phi^{n-k} \left(\frac{x}{\sqrt k}\right) + \Phi^{n-k} \left(- \frac{x}{\sqrt k}\right)\right) \eee^{-x^2/2} \dd x,
\label{eq:regular_simpl_external}
\end{align}
where $\ii = \sqrt {-1}$, and $\Phi$ denotes the distribution function of the standard normal law.
It is known that $\Phi$ admits an analytic continuation to the entire complex plane, namely
$$
\Phi(z) = \frac 12  + \frac 1 {\sqrt{2\pi}} \sum_{n =0}^{\infty}  \frac{(-1)^n }{(2n+1) 2^n n!} z^{2n+1}, \qquad z\in \C.
$$
In the above formulas for the angle sums, we need the values of $\Phi$ on the real and imaginary axes only, namely
\begin{equation}\label{eq:Phi_def}
\Phi(z) = \frac{1}{\sqrt{2\pi}} \int_{-\infty}^z e^{-t^2/2} \dd t,
\quad
\Phi( \ii z) = \frac 12 + \frac i{\sqrt{2\pi}} \int_0^{z} e^{t^2/2} \dd t,
\quad z\in\R.
\end{equation}

\subsection{Convex hulls of random walks}\label{2320b}
Let $\xi_1,\ldots,\xi_n$ be (possibly dependent) random $d$-dimensional vectors with partial sums
$$
S_i = \xi_1 + \ldots + \xi_i,\quad  1\leq i\leq n,\quad  S_0=0.
$$
The sequence $S_0,S_1,\ldots,S_n$ will be referred to as a \emph{random walk}.
Consider its \emph{convex hull} 
\begin{align}\label{2225}
\cQ_{n,d} &:= \conv(S_0,S_1,\ldots, S_n).
\end{align}
We impose the following assumptions on the joint distribution of  the increments. 
\begin{enumerate}
\item[$(\text{Ex})$] \textit{Exchangeability:} For every permutation $\sigma$ of the set $\{1,\ldots,n\}$, we have the distributional equality
$$
(\xi_{\sigma(1)},\ldots,  \xi_{\sigma(n)}) \eqdistr (\xi_1,\ldots,\xi_n).
$$
\item[$(\text{GP})$] \textit{General position:}
For every $1\leq i_1 < \ldots < i_d\leq n$, the probability that the vectors $S_{i_1}, \ldots,S_{i_d}$ are linearly dependent is $0$.
\end{enumerate}

Under these assumptions, it was shown in~\cite{KVZ17} that for all  $j\in\{0,\ldots,d-1\}$,
\begin{align}\label{800}
\E f_j(\cQ_{n,d})= \frac{2\cdot j!}{n!} \sum_{l=0}^{\infty}\stirling{n+1}{d-2l}  \stirlingsec{d-2l}{j+1}.
\end{align}

The right-hand side  contains the (signless) \emph{Stirling numbers of the first kind} $\stirling{n}{m}$ and the \emph{Stirling numbers of the second kind} $\stirlingsec{n}{m}$, which are defined as the number of permutations of an $n$-element set with exactly $m$ cycles and the number of partitions of an $n$-element set into $m$ non-empty subsets, respectively, for $n\in\N$ and $m \in \{1,\ldots,n\}$. For $n\in\N$ and $m\notin \{1,\ldots,n\}$ one defines the Stirling numbers to be $0$, so that~\eqref{800} and all similar formulas contain a finite number of non-vanishing terms only.   For the basic properties of the Stirling numbers, we refer to~\cite[\S6.1]{Graham1994}.  The exponential generating functions of the Stirling numbers are given by
\begin{equation}\label{eq:stirling_def}
\sum_{n=m}^{\infty} \stirling{n}{m}\frac{t^n}{n!} = \frac 1 {m!} \left(\log \frac 1 {1-t}\right)^m,
\quad
\sum_{n=m}^{\infty} \stirlingsec{n}{m}\frac{t^n}{n!} = \frac 1 {m!} (e^{t}-1)^m.
\end{equation}
With the convention $\stirling{0}{0} = \stirlingsec{0}{0} =1$, the two-variable generating functions are given by
\begin{align}\label{eq:stirling_def_2_variables}
\sum_{m=0}^\infty\sum_{n=m}^{\infty} \stirling{n}{m}\frac{t^n}{n!}y^m=(1-t)^{-y}, \quad\sum_{m=0}^\infty\sum_{n=m}^{\infty} \stirlingsec{n}{m}\frac{t^n}{n!}y^m=e^{(e^t-1)y}.
\end{align}

\section{Main results}\label{1025}
\subsection{Expected sums of Grassmann angles}
Our main results are the following two theorems in which  we compute the expected  sums of the Grassmann angles at the faces of any fixed dimension for each of the random polytopes $\cP_{n,d}$ and $\cQ_{n,d}$ defined in Section~\ref{2320}.
\begin{theorem}\label{2219}
Fix some $d\in\N$ and $n\geq d+1$. Then, for every
$j\in \{0,\ldots,d-1\}$ and $k\in \{0,\ldots,d\}$
the expected sum of the $k$-th Grassmann angles at the $j$-dimensional faces of $\cP_{n,d}$ equals
\begin{equation}\label{eq:theo_sum_grassmann_P}
\E \sum_{F\in\cF_j(\cP_{n,d})}\gamma_k(T_F(\cP_{n,d}))
=
2\sum_{l=0}^{\infty}  \sigma\stirling{n}{d-2l}  \sigma\stirlingsec{d-2l}{j+1}
-
2\sum_{l=0}^{\infty} \sigma \stirling{n}{k-2l}  \sigma\stirlingsec{k-2l}{j+1}.
\end{equation}
Here, the notation for the internal and external angle sums of the regular simplex introduced in Section~\ref{2320a} has been used.
\end{theorem}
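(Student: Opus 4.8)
The plan is to carry out the Perles--Shephard--Gr\"unbaum reduction described in the introduction, expressing the angle sum as a difference of expected face numbers and then invoking the self-similarity of the Gaussian model. Write $\Pi_k Q$ for the orthogonal projection of a polytope $Q\subset\R^d$ onto a uniformly distributed random $k$-dimensional subspace $W_k$ (independent of $Q$), so that $\Pi_{d-1}$ is the hyperplane projection of \eqref{eq:sum_beta}. The heart of the argument is the \emph{master relation}
\begin{equation}\label{eq:master}
\E\sum_{F\in\cF_j(P)}\gamma_k(T_F(P))=\E f_j(P)-\E f_j(\Pi_k P),
\end{equation}
valid for any polytope $P$ of full dimension $d$ (deterministic or random), all $j\in\{0,\ldots,d-1\}$ and all $k\in\{0,\ldots,d\}$. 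Granting \eqref{eq:master}, the theorem reduces to identifying $\E f_j(\Pi_k\cP_{n,d})$ with $\E f_j(\cP_{n,k})$ and inserting the Affentranger--Schneider--Baryshnikov--Vitale formula \eqref{eq:E_f_k_P_n_d}.

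I would prove \eqref{eq:master} face by face, treating first the main range $j<k$. By the description of the faces of a linear image of a polyhedral set, a $j$-face $F$ of $P$ projects to a ($j$-dimensional) face of $\Pi_k P$ exactly when $W_k$ meets the relative interior of the normal cone $N_F(P)$, and in that case distinct surviving faces yield distinct faces of $\Pi_k P$; hence
\begin{equation}\label{eq:survive}
\E f_j(\Pi_k P)=\sum_{F\in\cF_j(P)}\P\!\left[W_k\cap\relint N_F(P)\neq\emptyset\right].
\end{equation}
Because $N_F(P)=T_F(P)^\circ$ is a pointed $(d-j)$-dimensional cone and $W_k$ is almost surely in general position, the relative-interior condition may be replaced by $W_k\cap N_F(P)\neq\{0\}$ up to a null event, so the probability in \eqref{eq:survive} equals $\gamma_{d-k}(N_F(P))$. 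At this point I would invoke the conic polar duality
$$
\gamma_k(T_F(P))+\gamma_{d-k}(N_F(P))=1,
$$
which follows from the almost-sure dichotomy that, for a cone $C$ and an independent uniform subspace in general position, exactly one of the events $W_{d-k}\cap C\neq\{0\}$ and $W_k\cap C^\circ\neq\{0\}$ occurs (a Farkas-type separation statement). Substituting $\gamma_{d-k}(N_F(P))=1-\gamma_k(T_F(P))$ into \eqref{eq:survive} and rearranging gives \eqref{eq:master} for $j<k$. The remaining cases $j\geq k$ are immediate: the lineality identity \eqref{eq:grassmann_angles_lineality} yields $\gamma_k(T_F(P))=1$ while $\Pi_kP$ has no proper $j$-faces, so both sides of \eqref{eq:master} equal $\E f_j(P)$. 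As a sanity check, $k=d-1$ recovers \eqref{eq:sum_beta} through \eqref{739}.

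To conclude, specialize $P=\cP_{n,d}$. Rotational invariance of the standard Gaussian law makes $\Pi_kX_1,\ldots,\Pi_kX_n$ into i.i.d.\ standard Gaussian vectors in $W_k\cong\R^k$ (conditionally on $W_k$), and since the linear map $\Pi_k$ commutes with taking convex hulls, $\Pi_k\cP_{n,d}=\conv(\Pi_kX_1,\ldots,\Pi_kX_n)\eqdistr\cP_{n,k}$; thus $\E f_j(\Pi_k\cP_{n,d})=\E f_j(\cP_{n,k})$. Feeding \eqref{eq:E_f_k_P_n_d} in dimensions $d$ and $k$ into \eqref{eq:master} produces precisely \eqref{eq:theo_sum_grassmann_P}, the subtracted sum being $\E f_j(\cP_{n,k})$, which vanishes automatically when $j\geq k$ since then $\sigma\stirlingsec{k-2l}{j+1}=0$. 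I expect the main obstacle to lie entirely in the first step: making the face correspondence for projections and the accompanying general-position claims rigorous, i.e.\ checking that for the random simplicial polytope $\cP_{n,d}$ and an independent random subspace the degenerate configurations (a subspace meeting only the relative boundary of some $N_F(P)$, or a face collapsing under $\Pi_k$) are null, so that \eqref{eq:survive} and the passage from $\relint N_F(P)$ to $N_F(P)$ are exact in expectation.
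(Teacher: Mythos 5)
You are correct, and your overall architecture coincides with the paper's: the Perles--Shephard reduction of the angle sum to $\E f_j(\cP_{n,d})-\E f_j(\Pi_k\cP_{n,d})$ (the paper's Theorem~\ref{623}), rotational invariance giving $\Pi_k\cP_{n,d}\eqdistr\cP_{n,k}$, and then \eqref{eq:E_f_k_P_n_d} applied in dimensions $d$ and $k$, with the range $k\le j$ handled via \eqref{eq:grassmann_angles_lineality} exactly as the paper does. Where you genuinely diverge is in the proof of the master relation. The paper argues primally: by Proposition~\ref{1122} (the equivalence of (a) and (f)), $\Pi_kF$ is a face of $\Pi_kP$ if and only if $T_F(P)\cap W_k^\perp=\{0\}$, which immediately gives $\gamma_k(T_F(P))=\P[\Pi_kF\notin\cF_j(\Pi_kP)]$ (Theorem~\ref{820}) with no polarity involved. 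You instead use the dual, Affentranger--Schneider-style criterion that $F$ survives iff $W_k$ meets $\relint N_F(P)$, evaluate the survival probability as $\gamma_{d-k}(N_F(P))$, and convert by the complementarity $\gamma_k(C)+\gamma_{d-k}(C^\circ)=1$. That identity is indeed valid for closed convex cones that are not linear subspaces --- it is the known relation $U_k(C)+U_{d-k}(C^\circ)=1/2$ for conic quermassintegrals, and it also follows from \eqref{eq:crofton_conic} together with $\upsilon_0(C)+\upsilon_2(C)+\cdots=\upsilon_1(C)+\upsilon_3(C)+\cdots=1/2$ --- and it applies here since $T_F(\cP_{n,d})$ is never a subspace; so your detour through the polar is sound and lets you bypass the separation argument behind Proposition~\ref{1122}. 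The price is that you invoke as black boxes precisely the facts the paper proves in detail in Sections~\ref{631} and~\ref{2157}: that every $j$-face of $\Pi_kP$ is the image of a unique $j$-face of $P$ (Proposition~\ref{1640}, needed for your survival-count identity), and the general-position null-set statements (e.g.\ that $W_k$ meeting only the relative boundary of $N_F(P)$ is a null event, cf.\ Lemma~\ref{lem:intersects_boundary_intersects_interior}); you correctly flag these as the main obstacle. One convention to repair: your master relation cannot hold verbatim at $j=k$ if, as under the paper's convention, $\cF_k(\Pi_kP)$ contains $\Pi_kP$ itself, since the right-hand side is then off by $1$; the paper avoids this by stating Theorem~\ref{623} only for $j<k$ and treating $k\le j$ separately via the lineality argument --- which is what your case split effectively does anyway, so simply restrict the relation to $j<k$. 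Finally, note that applying \eqref{eq:E_f_k_P_n_d} in dimension $k$ requires $n\ge k+1$, which holds automatically because $n\ge d+1\ge k+1$.
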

In the special case when $k=d-1$, the above theorem combined with~\eqref{739} yields the following formula for the expected internal solid-angle sums at the $j$-dimensional faces of $\cP_{n,d}$.
\begin{corollary}\label{cor:angle_sum_P}
Fix some $d\in\N$ and $n\geq d+1$. For every $j\in\{0,\ldots,d-1\}$ the expected sum of internal angles of $\cP_{n,d}$ at its $j$-dimensional faces is given by
\begin{equation*}
\E \sum_{F\in\cF_j(\cP_{n,d})} \alpha(T_F(\cP_{n,d}))
=
\sum_{s=0}^{\infty} (-1)^s \sigma\stirling{n}{d-s}  \sigma\stirlingsec{d-s}{j+1}.
\end{equation*}
\end{corollary}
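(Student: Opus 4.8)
The plan is to read off the corollary as the $k=d-1$ specialization of Theorem~\ref{2219}, using the elementary identity~\eqref{739} to convert the $(d-1)$-th Grassmann angle back into the internal solid angle. The only genuinely geometric point is to check that the indicator term in~\eqref{739} disappears for every face that actually occurs in the sum; the rest is bookkeeping.

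First I would record that, since $n\geq d+1$ and the Gaussian sample is almost surely in general position, the polytope $\cP_{n,d}$ has full dimension $d$ with probability one. Hence for every face $F$ the tangent cone $T_F(\cP_{n,d})$ is $d$-dimensional, so~\eqref{739} applies and gives
$$
\alpha(T_F(\cP_{n,d}))=\tfrac12\,\gamma_{d-1}(T_F(\cP_{n,d}))+\tfrac12\,\ind[T_F(\cP_{n,d})=\R^d].
$$
For a \emph{proper} face $F$, i.e.\ one with $\dim F=j\leq d-1$, the tangent cone is not all of $\R^d$ (it is $\R^d$ only for the improper $d$-dimensional face $F=\cP_{n,d}$), so the indicator vanishes and we are left with $\alpha(T_F(\cP_{n,d}))=\tfrac12\,\gamma_{d-1}(T_F(\cP_{n,d}))$ pointwise for all $F\in\cF_j(\cP_{n,d})$. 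Summing over $F\in\cF_j(\cP_{n,d})$ and taking expectations yields
$$
\E\sum_{F\in\cF_j(\cP_{n,d})}\alpha(T_F(\cP_{n,d}))
=\tfrac12\,\E\sum_{F\in\cF_j(\cP_{n,d})}\gamma_{d-1}(T_F(\cP_{n,d})).
$$

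Next I would invoke Theorem~\ref{2219} with $k=d-1$ to evaluate the right-hand side, obtaining
$$
\E\sum_{F\in\cF_j(\cP_{n,d})}\alpha(T_F(\cP_{n,d}))
=\sum_{l=0}^{\infty}\sigma\stirling{n}{d-2l}\sigma\stirlingsec{d-2l}{j+1}
-\sum_{l=0}^{\infty}\sigma\stirling{n}{d-1-2l}\sigma\stirlingsec{d-1-2l}{j+1}.
$$
Finally I would merge the two series by reindexing. Setting $s=2l$ in the first sum and $s=2l+1$ in the second sum, the even values of $s\geq 0$ contribute the terms of the first series with the sign $(-1)^s=+1$, while the odd values of $s\geq 1$ contribute the terms of the second series with the sign $(-1)^s=-1$. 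Because of the convention $\sigma\stirling{n}{m}=\sigma\stirlingsec{n}{m}=0$ for $m\notin\{1,\dots,n\}$, both series are finite, and the two together collapse into the single alternating sum
$$
\sum_{s=0}^{\infty}(-1)^s\,\sigma\stirling{n}{d-s}\,\sigma\stirlingsec{d-s}{j+1},
$$
which is exactly the asserted formula.

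The main (and essentially only) obstacle is the verification that the indicator term in~\eqref{739} drops out for every face contributing to the sum; once the full-dimensionality of $\cP_{n,d}$ and the properness of $j$-faces with $j\leq d-1$ are noted, this is immediate. Everything after that is a direct substitution into Theorem~\ref{2219} followed by the routine even/odd reindexing, so no further estimates or convergence arguments are required.
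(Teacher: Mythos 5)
Your proposal is correct and coincides with the paper's own (implicit) derivation: the paper likewise obtains the corollary by setting $k=d-1$ in Theorem~\ref{2219} and invoking~\eqref{739}, with the indicator vanishing since tangent cones at proper faces of the full-dimensional polytope $\cP_{n,d}$ are never all of $\R^d$. Your careful justification of that indicator step and the even/odd reindexing into the alternating sum $\sum_{s\geq 0}(-1)^s\sigma\stirling{n}{d-s}\sigma\stirlingsec{d-s}{j+1}$ is exactly the bookkeeping the paper leaves to the reader.
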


\vspace*{2mm}
Next we are going to state analogous results for convex hulls of random walks.
\begin{theorem}\label{1138thm}
Fix some $d\in\N$ and $n\geq d$. Then, for every
$j\in \{0,\ldots,d-1\}$ and $k\in \{0,\ldots,d\}$
the expected sum of the $k$-th Grassmann angles at the $j$-dimensional faces of $\cQ_{n,d}$ equals
\begin{equation}
\E \sum_{F\in\cF_j(\cQ_{n,d})}\gamma_k(T_F(\cQ_{n,d}))
= \frac{2\cdot j!}{n!} \sum_{l=0}^{\infty}\stirling{n+1}{d-2l}  \stirlingsec{d-2l}{j+1}
- \frac{2\cdot j!}{n!} \sum_{l=0}^{\infty}\stirling{n+1}{k-2l}  \stirlingsec{k-2l}{j+1}.
\end{equation}
Here, the notation for the Stirling numbers introduced in Section~\ref{2320b} has been used.
\end{theorem}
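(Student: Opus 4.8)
The plan is to follow exactly the scheme already used for the Gaussian polytopes $\cP_{n,d}$ in Theorem~\ref{2219}, replacing only the two model-specific inputs: the face-number formula \eqref{800} and the behaviour of $\cQ_{n,d}$ under projection. The two ingredients are, first, a Grassmann-angle version of the Perles--Shephard identity \eqref{eq:sum_beta}, due to Gr\"unbaum, and second, the self-similarity of the random-walk model under a uniform random projection.

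First I would record the general identity. Let $P\subset\R^d$ be a full-dimensional polytope, fix $j\in\{0,\ldots,d-1\}$ and $k\in\{0,\ldots,d\}$, and let $L_k$ be a uniformly distributed $k$-dimensional linear subspace, independent of $P$, with orthogonal complement $W_{d-k}=L_k^\perp$. Writing $\Pi_{L_k}$ for the orthogonal projection onto $L_k$, the claim is
\[
\sum_{F\in\cF_j(P)}\gamma_k(T_F(P)) = f_j(P)-\E f_j(\Pi_{L_k}P).
\]
The point is that, for a generic $L_k$, every $j$-face of $\Pi_{L_k}P$ is the image of a unique $j$-face $F$ of $P$, and such an $F$ maps to a genuine $j$-face of the projection precisely when $W_{d-k}\cap T_F(P)=\{0\}$: this single condition simultaneously forces $\Pi_{L_k}$ to be injective on $\aff F$ (so the dimension is preserved, using $\lin F\subset T_F(P)$) and prevents $F$ from being hidden behind the body. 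Hence $F$ survives with probability $1-\P[W_{d-k}\cap T_F(P)\neq\{0\}]=1-\gamma_k(T_F(P))$, and summing over $F$ gives the identity. This is exactly where the (well-known but rarely proven) results on faces of linear images of polyhedral sets, advertised in the abstract, enter. The degenerate cases $k=0$ and $j\ge k$ are treated directly from \eqref{eq:grassmann_angles_lineality} (all relevant tangent cones have lineality space of dimension $j$, whence $\gamma_k\equiv 1$) together with the Stirling-number conventions, and are seen to be consistent with the asserted formula.

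Next I would establish the self-similarity. Apply the identity to $P=\cQ_{n,d}=\conv(S_0,\ldots,S_n)$, taken independent of $L_k$. Since orthogonal projection commutes with taking convex hulls, $\Pi_{L_k}\cQ_{n,d}$ is the convex hull of the walk $\Pi_{L_k}S_0,\ldots,\Pi_{L_k}S_n$ in $L_k\cong\R^k$, with increments $\Pi_{L_k}\xi_1,\ldots,\Pi_{L_k}\xi_n$. These increments are exchangeable, being the images under the fixed map $\Pi_{L_k}$ of the exchangeable family $\xi_1,\ldots,\xi_n$, and they satisfy $(\text{GP})$ in $\R^k$ almost surely: by $(\text{GP})$ and $n\ge d\ge k$ any $k$ of the vectors $S_{i_1},\ldots,S_{i_k}$ are a.s.\ linearly independent (extend the index set to a $d$-subset), and for an independent uniform $L_k$ one has $\Span(S_{i_1},\ldots,S_{i_k})\cap W_{d-k}=\{0\}$ a.s., so $\Pi_{L_k}$ remains injective on this span. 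Thus the projected walk is again an instance of the model, now in dimension $k$, and \eqref{800} with $d$ replaced by $k$ gives
\[
\E f_j(\Pi_{L_k}\cQ_{n,d}) = \frac{2\cdot j!}{n!}\sum_{l=0}^\infty\stirling{n+1}{k-2l}\stirlingsec{k-2l}{j+1}.
\]

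Finally I would take the expectation over $\cQ_{n,d}$ in the Grassmann-angle identity and substitute \eqref{800} for $\E f_j(\cQ_{n,d})$ together with the expression just obtained for $\E f_j(\Pi_{L_k}\cQ_{n,d})$; the two terms combine into the claimed formula. The main obstacle is the careful justification of the projection identity in the second paragraph, namely that for generic $L_k$ the $j$-faces of $\Pi_{L_k}P$ are exactly the images of those $j$-faces $F$ with $W_{d-k}\cap T_F(P)=\{0\}$, with no new faces created. This is precisely the content of the auxiliary results on faces of linear images and must be invoked with care, whereas the exchangeability and general-position checks for the projected walk are routine by comparison.
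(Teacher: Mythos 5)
Your proposal is correct and follows essentially the same route as the paper: the Grassmann-angle identity $\sum_{F\in\cF_j(P)}\gamma_k(T_F(P))=f_j(P)-\E f_j(\Pi_k P)$ (the paper's Theorem~\ref{623}, proved via the results on faces of linear images that you correctly flag as the delicate point), combined with the observation that the projected walk again satisfies $(\text{Ex})$ and $(\text{GP})$ so that~\eqref{800} applies in dimension $k$, with the case $k\leq j$ handled via~\eqref{eq:grassmann_angles_lineality} and the vanishing of $\stirlingsec{k-2l}{j+1}$. Your general-position check (extending any $k$ indices to a $d$-subset, using $n\geq d$, then projecting onto the independent uniform $k$-plane) matches the paper's argument, so there is nothing to add.
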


Let us mention some special and low-dimensional cases of Theorem~\ref{1138thm}.  Taking $k=d-1$ in Theorem~\ref{1138thm} and making use of~\eqref{739}, we compute the expected internal solid-angle sums at the $j$-dimensional faces of $\cQ_{n,d}$.
\begin{corollary}\label{cor:angle_sum_Q}
Fix some $d\in\N$ and $n\geq d$. Then, for every $j\in\{0,\ldots,d-1\}$ the expected sum of internal angles of $\cQ_{n,d}$ at its $j$-dimensional faces is given by
\begin{equation*}
\E \sum_{F\in\cF_j(\cQ_{n,d})} \alpha(T_F(\cQ_{n,d}))
= \frac{j!}{n!} \sum_{s=0}^{\infty} (-1)^s \stirling{n+1}{d-s}  \stirlingsec{d-s}{j+1}.
\end{equation*}
\end{corollary}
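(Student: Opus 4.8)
The plan is to obtain Corollary~\ref{cor:angle_sum_Q} as a direct specialization of Theorem~\ref{1138thm} to the index $k=d-1$, converting the resulting Grassmann-angle sum into a solid-angle sum by means of the identity~\eqref{739}.

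First I would record that, for every $j\in\{0,\ldots,d-1\}$ and every face $F\in\cF_j(\cQ_{n,d})$, the tangent cone $T_F(\cQ_{n,d})$ is $d$-dimensional but differs from $\R^d$. Full-dimensionality holds because $\cQ_{n,d}$ is $d$-dimensional almost surely under the general position assumption $(\text{GP})$ with $n\geq d$: taking $v=p-f_0$ in~\eqref{eq:def_tangent_cone} shows $\cQ_{n,d}-f_0\subseteq T_F(\cQ_{n,d})$, and the left-hand side already spans $\R^d$. On the other hand, $T_F(\cQ_{n,d})=\R^d$ would force $f_0$ into the interior of $\cQ_{n,d}$, which is impossible for a point in the relative interior of a proper face $F$ with $\dim F=j<d$. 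Consequently $\ind[T_F(\cQ_{n,d})=\R^d]=0$, and~\eqref{739} specializes to the pointwise identity $\alpha(T_F(\cQ_{n,d}))=\tfrac12\,\gamma_{d-1}(T_F(\cQ_{n,d}))$.

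Summing this identity over all $F\in\cF_j(\cQ_{n,d})$, taking expectations, and invoking Theorem~\ref{1138thm} with $k=d-1$, I would arrive at
\begin{equation*}
\E \sum_{F\in\cF_j(\cQ_{n,d})} \alpha(T_F(\cQ_{n,d}))
= \frac{j!}{n!}\left(\sum_{l=0}^{\infty}\stirling{n+1}{d-2l}\stirlingsec{d-2l}{j+1}
- \sum_{l=0}^{\infty}\stirling{n+1}{d-1-2l}\stirlingsec{d-1-2l}{j+1}\right),
\end{equation*}
where the factor $\tfrac12$ has cancelled the leading factor $2$ of Theorem~\ref{1138thm}. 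It then remains to recognize the two series as the even- and odd-indexed parts of a single alternating sum: substituting $s=2l$ in the first series and $s=2l+1$ in the second collects the terms $\stirling{n+1}{d-s}\stirlingsec{d-s}{j+1}$ with sign $(-1)^s$, which yields exactly $\tfrac{j!}{n!}\sum_{s=0}^{\infty}(-1)^s\stirling{n+1}{d-s}\stirlingsec{d-s}{j+1}$, as claimed.

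I expect no serious obstacle here; the analytic content is carried entirely by Theorem~\ref{1138thm}, and the argument is pure bookkeeping. The only point meriting a line of justification is the vanishing of the indicator term, namely that the tangent cones at proper faces are full-dimensional yet proper, which reduces to the almost-sure $d$-dimensionality of $\cQ_{n,d}$ guaranteed by $(\text{GP})$ together with $n\geq d$.
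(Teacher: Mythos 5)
Your proposal is correct and follows exactly the paper's route: the paper likewise obtains the corollary by setting $k=d-1$ in Theorem~\ref{1138thm} and invoking~\eqref{739}, with the two even/odd-indexed series merging into the single alternating sum. Your extra verification that each tangent cone $T_F(\cQ_{n,d})$ is full-dimensional (via $(\text{GP})$ and $n\geq d$) yet distinct from $\R^d$, so that the indicator in~\eqref{739} vanishes, is a detail the paper leaves implicit, and it is argued correctly.
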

For  example, for $d=2$, the expected sum of angles of the random polygon  $\cQ_{n,2}$ at its vertices is given by
$$
\E \sum_{F\in\cF_0(\cQ_{n,2})} \alpha(T_F(\cQ_{n,2})) = \frac 1 {n!}\left(\stirling{n+1}{2}\stirlingsec{2}{1} - \stirling{n+1}{1}\stirlingsec{1}{1} \right)
=
H_n-1,
$$
where
$$
H_n = 1 + \frac 12 + \frac 13 +\ldots + \frac 1n
$$
is the $n$-th harmonic number. Since the angle sum of a polygon with $v$ vertices equals $(v-2)/2$ times the full solid angle $2\pi$, this agrees with the result of Baxter~\cite{baxter}, see also~\cite{baxter_nielsen} and~\cite[Lemma 4.1]{snyder_steele} for generalizations,  who proved that the expected number of vertices of $\cQ_{n,2}$ is
$$
\E f_{0}(\cQ_{n,2}) = 2 H_n.
$$

In dimension  $d=3$, the expected sum of internal angles of $\cQ_{n,3}$ at its vertices and a similar sum for edges are given by
\begin{align*}
\E \sum_{F\in\cF_0(\cQ_{n,3})} \alpha(T_F(\cQ_{n,3}))
&=
\frac 1 {n!}\left(\stirling{n+1}{3}\stirlingsec{3}{1} - \stirling{n+1}{2}\stirlingsec{2}{1} + \stirling{n+1}{1}\stirlingsec{1}{1} \right)\\
&=
\frac 12 (H_n)^2 - H_n -\frac 12 H_n^{(2)} + 1,\\
\E \sum_{F\in\cF_1(\cQ_{n,3})} \alpha(T_F(\cQ_{n,3}))
&=
\frac 1 {n!}\left(\stirling{n+1}{3}\stirlingsec{3}{2} - \stirling{n+1}{2}\stirlingsec{2}{2}  \right)\\
&=
\frac 32 (H_n)^2 - H_n -\frac 32 H_n^{(2)},
\end{align*}
where
$$
H_n^{(2)} = 1 + \frac 1{2^2} + \frac 1{3^2} +\ldots + \frac 1{n^2}.
$$
\begin{remark}
Using relations stated in Lemma~\ref{lem:identity} and in Remark~\ref{rem:stirling_relations}, below, one can rewrite Theorems~\ref{2219} and~\ref{1138thm} as follows:
\begin{align}
\E \sum_{F\in\cF_j(\cP_{n,d})}\gamma_k(T_F(\cP_{n,d}))
&=
2\sum_{l=1}^{\infty} \sigma \stirling{n}{k+2l}  \sigma\stirlingsec{k+2l}{j+1}
-
2\sum_{l=1}^{\infty}  \sigma\stirling{n}{d+2l}  \sigma\stirlingsec{d+2l}{j+1},\label{eq:alternative_P}\\
\E \sum_{F\in\cF_j(\cQ_{n,d})}\gamma_k(T_F(\cQ_{n,d}))
&=
 \frac{2\cdot j!}{n!}  \sum_{l=1}^{\infty}\stirling{n+1}{k+2l}  \stirlingsec{k+2l}{j+1}
-\frac{2\cdot j!}{n!} \sum_{l=1}^{\infty}\stirling{n+1}{d+2l}  \stirlingsec{d+2l}{j+1}. \label{eq:alternative_Q}
\end{align}
\end{remark}

\begin{remark}
Let us also mention one more result on angle sums of random polytopes. For typical cells in stationary tessellations, it is possible to compute  the expected angle-sums explicitly in terms of the cell intensities; see Theorem~10.1.3 and Equation~(10.4) in~\cite{SW08}.
\end{remark}

\subsection{Method of proof of Theorems~\ref{2219} and~\ref{1138thm}}
The main ingredient in the proofs of Theorems~\ref{2219} and~\ref{1138thm} is  the following stochastic representation of the Grassmann  angles of a polyhedral set. We recall that $W_k$ denotes a random, uniformly distributed linear random subspace of dimension $k$ in $\R^d$ and that $\Pi_k$ denotes the orthogonal projection on $W_k$.
The next theorem was stated by  Gr\"unbaum~\cite[p.~298]{bG68} with the comment that it is a simple application of the separation theorem for convex sets. Since its proof does not seem  trivial to us and since the result has been used many times since then (most notably, by Affentranger and Schneider~\cite{AS92}, see also~\cite[Section~8.3]{SW08}), we shall provide a proof in Sections~\ref{631} and~\ref{2157}.

\begin{theorem}\label{820}
Let $P\subset \R^d$ be a polyhedral set with non-empty interior. Then, for all integer $0\leq j < k \leq d$ and all  $F\in\cF_j(P)$ we have
\begin{align}\label{eq:gamma_k_proof}
\gamma_k(T_F(P)) = \P[\Pi_kF \not \in \cF(\Pi_k P)] = \P[\Pi_kF \not \in \cF_j(\Pi_k P)].
\end{align}
\end{theorem}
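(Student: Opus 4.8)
The plan is to establish the two equalities in~\eqref{eq:gamma_k_proof} by relating the event that the projected face $\Pi_k F$ fails to be a face of $\Pi_k P$ to the event that the random subspace $W_{d-k}$ (the orthogonal complement of $W_k$) meets the tangent cone $T_F(P)$ nontrivially. The starting observation is the definition~\eqref{1138}, namely $\gamma_k(T_F(P)) = \P[W_{d-k}\cap T_F(P) \neq \{0\}]$, so the whole task reduces to showing that, almost surely,
\begin{equation*}
\{W_{d-k}\cap T_F(P) \neq \{0\}\} = \{\Pi_k F \notin \cF(\Pi_k P)\}
\end{equation*}
up to a null event, and then separately that $\Pi_k F \notin \cF(\Pi_k P)$ coincides with $\Pi_k F \notin \cF_j(\Pi_k P)$, i.e.\ that when $\Pi_k F$ \emph{is} a face it is in fact a $j$-dimensional face.

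First I would reduce to the local picture at the face $F$. Choosing a point $f_0 \in \relint F$, the behaviour of $P$ near $f_0$ is governed by the tangent cone $T_F(P)$, and the key geometric principle (the separation/supporting-hyperplane criterion that Gr\"unbaum invokes) is that a face $F$ survives projection, meaning $\Pi_k F \in \cF(\Pi_k P)$, precisely when $P$ admits a supporting hyperplane along $F$ whose normal lies in $W_k$. Equivalently, $\Pi_k F$ is a face of $\Pi_k P$ if and only if there exists a nonzero vector $u \in W_k$ with $\langle u, v\rangle \le 0$ for all $v \in T_F(P)$, i.e.\ $N_F(P) \cap W_k \neq \{0\}$. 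I would make this precise: the supporting functional that exposes $\Pi_k F$ in $W_k$ pulls back, via the projection, to a functional in $W_k$ that supports $P$ at $F$, and conversely. This translates the face-survival question into a statement about the normal cone intersecting $W_k$.

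The second step is a polarity/duality argument converting $N_F(P)\cap W_k \neq \{0\}$ into the complementary statement about $T_F(P)$ and $W_{d-k}$. Writing $C = T_F(P)$ so that $N_F(P) = C^\circ$, I would use the standard conic polarity relation — that $C^\circ \cap W_k = \{0\}$ is equivalent to $C \cap W_k^\perp \neq \{0\}$ (more carefully, $C + W_k = \R^d$ dualizes to $C^\circ \cap W_k = \{0\}$, and the spanning of $\R^d$ by $C$ and $W_k$ is in turn equivalent to $C$ meeting the orthogonal complement $W_{d-k} = W_k^\perp$ nontrivially). Since $W_{d-k}$ is uniform on the Grassmannian, this chain of equivalences gives
\begin{equation*}
\{\Pi_k F \notin \cF(\Pi_k P)\} = \{N_F(P)\cap W_k = \{0\}\} = \{T_F(P) \cap W_{d-k} \neq \{0\}\}
\end{equation*}
almost surely, whence $\P[\Pi_k F \notin \cF(\Pi_k P)] = \gamma_k(T_F(P))$ by~\eqref{1138}. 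The final equality $\cF = \cF_j$ then follows because, when $F$ does survive, $\Pi_k$ restricted to $\aff F$ is injective almost surely (the kernel $W_{d-k}$ generically misses the $j$-dimensional direction space of $F$, using $j < k$ so $j + (d-k) < d$), so $\dim \Pi_k F = \dim F = j$.

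The main obstacle I anticipate is handling the boundary/degeneracy cases rigorously rather than the generic duality, which is routine. Specifically, the polarity equivalences above are clean when the relevant cones are in \emph{general position} with respect to the random subspaces, and one must argue that the exceptional configurations — where $W_{d-k}$ touches $T_F(P)$ only in its relative boundary, or where $\Pi_k F$ is a face but of dimension smaller than $j$ — occur with probability zero under the uniform law on the Grassmannian. This is where the hypothesis that $P$ has non-empty interior and the almost-sure transversality of $W_k$ to the finitely many affine hulls of faces will be needed, and carefully excluding these null events (so that the set-theoretic equalities hold $\P$-almost surely, which suffices for the probabilities) is the delicate part of the argument.
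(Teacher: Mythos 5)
Your proposal is correct in outline, but it reaches the deterministic core of the theorem by a genuinely different route than the paper. The paper reduces everything to Proposition~\ref{1122}: for a kernel in general position, $\Pi_k F$ is a face (equivalently, a $j$-face) of $\Pi_k P$ if and only if $T_F(P)\cap\Ker\Pi_k=\{0\}$, and it proves this equivalence without ever invoking normal cones or polarity --- the chain (a)$\Rightarrow$(b)$\Rightarrow$(c)$\Rightarrow$(d) uses the positive-hull criterion $0\in\relint\conv M\iff\pos M=\lin M$ (Lemma~\ref{1134}), (d)$\Leftrightarrow$(e)$\Leftrightarrow$(f) uses Lemma~\ref{1146} and the boundary--interior Lemma~\ref{lem:intersects_boundary_intersects_interior} under general position, and (f)$\Rightarrow$(a) is a direct midpoint argument. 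The probabilistic shell of your argument coincides with the paper's: a.s.\ general position of $W_{d-k}$ via \cite[Lemma~13.2.1]{SW08}, and the $\cF$-versus-$\cF_j$ step via a.s.\ injectivity of $\Pi_k$ on the direction space of $F$ (Lemma~\ref{1641}). What differs is your deterministic criterion: face survival iff $N_F(P)\cap W_k\neq\{0\}$ via supporting functionals (using that faces of polyhedral sets are exposed), followed by conic polarity. This is in effect a fleshed-out version of Gr\"unbaum's original ``separation theorem'' remark, which the authors explicitly declined to reconstruct; it is shorter and makes transparent the a.s.\ complementarity of the events $\{T_F(P)\cap W_{d-k}\neq\{0\}\}$ and $\{N_F(P)\cap W_k\neq\{0\}\}$, whereas the paper's route is fully deterministic under an explicit general-position hypothesis and produces along the way Proposition~\ref{1640} (every $j$-face of $\Pi_kP$ is the image of a unique $j$-face of $P$), which is needed anyway for Theorem~\ref{623}.

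Two points in your sketch need repair when written out. First, the parenthetical duality is misstated: the polar of $C+W_k$ is $C^\circ\cap W_k^\perp=C^\circ\cap W_{d-k}$, so the correct pairing is $C^\circ\cap W_k=\{0\}\iff C+W_{d-k}=\R^d$; from there, $C+W_{d-k}=\R^d$ iff $\Pi_k C=\R^k$ iff $\Int C\cap W_{d-k}\neq\varnothing$ (Lemma~\ref{1146}), which under general position is equivalent to $C\cap W_{d-k}\neq\{0\}$. You swapped $W_k$ and $W_{d-k}$ in the intermediate steps, although your stated endpoints are the right ones. Second, the converse direction of your survival criterion requires a functional $u\in\relint N_F(P)\cap W_k$: a vector in the relative boundary of $N_F(P)$ exposes a face strictly containing $F$, so $N_F(P)\cap W_k\neq\{0\}$ alone does not yield $\Pi_kF\in\cF(\Pi_kP)$. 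This is exactly the dual manifestation of the null event you flag ($W_{d-k}$ touching only the boundary of $T_F(P)$); note, however, that $N_F(P)$ has dimension $d-j<d$, so the boundary--interior argument must be run inside $\lin N_F(P)$ applied to the (generically $(k-j)$-dimensional) random intersection $W_k\cap\lin N_F(P)$, which takes an extra word beyond quoting Lemma~\ref{lem:intersects_boundary_intersects_interior} as stated for full-dimensional cones. Since you explicitly identify the null-event bookkeeping as the delicate part, these are fixable slips rather than gaps in the approach.
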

Taking the sum over all faces $F\in \cF_j(P)$ and noting that for almost every choice of $W_k$ every $j$-face of $\Pi_kP$ is the projection of some unique $j$-face of $P$ (which will be shown in Proposition~\ref{1640}) one arrives at the following
\begin{theorem}\label{623}
Let $P\subset \R^d$ be a polyhedral set with non-empty interior.  Then for all integer $0\leq j < k \leq d$  we have
\begin{align*}
\sum_{F\in\cF_j(P)}\gamma_k(T_F(P)) =f_j(P)-\E f_j(\Pi_k P).
\end{align*}
\end{theorem}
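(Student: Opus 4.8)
The plan is to deduce the identity by summing the face-wise representation of Theorem~\ref{820} over all $j$-faces $F$ of $P$ and then converting the resulting sum of probabilities into an expected face number, using the almost-sure bijection between the $j$-faces of $\Pi_k P$ and those $j$-faces of $P$ that survive projection.

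First I would fix integers $0\le j<k\le d$ and apply the second equality in Theorem~\ref{820}, namely $\gamma_k(T_F(P)) = \P[\Pi_k F \notin \cF_j(\Pi_k P)]$, to each $F\in\cF_j(P)$. Writing this complementary probability as $1-\P[\Pi_k F\in\cF_j(\Pi_k P)]$, summing over the finite set $\cF_j(P)$, and using linearity of expectation together with $f_j(P)=|\cF_j(P)|$, I obtain
\begin{equation*}
\sum_{F\in\cF_j(P)}\gamma_k(T_F(P)) = f_j(P) - \E\sum_{F\in\cF_j(P)}\ind[\Pi_k F\in\cF_j(\Pi_k P)].
\end{equation*}
Thus everything reduces to identifying the last expectation with $\E f_j(\Pi_k P)$.

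To do so I would argue that, for almost every realization of the random subspace $W_k$, the assignment $F\mapsto\Pi_k F$ restricts to a bijection from $\{F\in\cF_j(P)\colon\Pi_k F\in\cF_j(\Pi_k P)\}$ onto $\cF_j(\Pi_k P)$. Surjectivity and injectivity are precisely the content of Proposition~\ref{1640}: almost surely every $j$-face of $\Pi_k P$ is the image $\Pi_k F$ of a \emph{unique} $j$-face $F$ of $P$. Granting this, the integrand $\sum_{F\in\cF_j(P)}\ind[\Pi_k F\in\cF_j(\Pi_k P)]$ counts each $j$-face of $\Pi_k P$ exactly once, hence equals $f_j(\Pi_k P)$ almost surely; taking expectations then yields the claim.

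The main obstacle is the bijection statement (Proposition~\ref{1640}) rather than the elementary bookkeeping above. Away from a null set of subspaces one must exclude two pathologies: that a $j$-face of $P$ drops in dimension under $\Pi_k$, and that two distinct $j$-faces of $P$ project onto the same face of $\Pi_k P$. The first is controlled by a transversality argument, since the $j$-dimensional direction space of $\aff F$ meets the $(d-k)$-dimensional kernel $W_k^\perp$ only in $\{0\}$ for generic $W_k$ when $j<k$, so $\Pi_k F$ stays $j$-dimensional. The second, together with the surjectivity onto $\cF_j(\Pi_k P)$, rests on the structure of faces of linear images of polyhedral sets; supplying a careful proof of this correspondence is exactly what Proposition~\ref{1640} is meant to provide, and that is where the real work lies.
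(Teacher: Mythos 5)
Your proposal is correct and follows essentially the same route as the paper: sum the face-wise identity $\gamma_k(T_F(P)) = \P[\Pi_k F \notin \cF_j(\Pi_k P)]$ from Theorem~\ref{820} over $F\in\cF_j(P)$, rewrite the probabilities as expected indicators, and invoke the almost-sure existence-and-uniqueness statement of Proposition~\ref{1640} (valid since $j<k=\dim \Pi_k P$, so every $j$-face of $\Pi_k P$ is proper) to identify $\sum_{F}\ind[\Pi_k F\in\cF_j(\Pi_k P)]$ with $f_j(\Pi_k P)$. Your closing remarks on where the real work lies (dimension preservation and injectivity under generic $W_k$) match exactly what the paper delegates to Lemma~\ref{1641} and Proposition~\ref{1640}, so nothing is missing.
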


\vskip 10pt
The proofs of Theorems~\ref{820} and~\ref{623}  are postponed to Section~\ref{2157}. In Section~\ref{631} we will collect some properties of convex cones which are essential for these proofs. At this point, we provide the proofs of Theorems~\ref{2219} and~\ref{1138thm} assuming Theorem~\ref{623}.
\begin{proof}[Proof of Theorem~\ref{2219} assuming Theorem~\ref{623}]
First of all, let us establish the statement for all $j\in \{0,\ldots,d-1\}$ and $k\in \{0,\ldots,d\}$ such that $k\leq j$.
Since the lineality space of $T_F(\cP_{n,d})$ has dimension $j$ for every $F\in \cF_j(\cP_{n,d})$, which implies that $\gamma_k(T_F(\cP_{n,d})) = 1$ by~\eqref{eq:grassmann_angles_lineality}, we have
$$
\E \sum_{F\in\cF_j(\cP_{n,d})}\gamma_k(T_F(\cP_{n,d}))
=
\E f_j(\cP_{n,d})
=
2\sum_{l=0}^{\infty}  \sigma\stirling{n}{d-2l}  \sigma\stirlingsec{d-2l}{j+1},
$$
where in the last step we used~\eqref{eq:E_f_k_P_n_d}. This proves~\eqref{eq:theo_sum_grassmann_P} because the second term on the right-hand side there vanishes.

In the following, let $0\leq j < k \leq d$. Projecting $X_1,\ldots,X_n$ onto the random uniform $k$-plane $W_k$ gives $n$ independent standard Gaussian vectors in $W_k$ which can be identified with $\R^k$. Applying~\eqref{eq:E_f_k_P_n_d} to $\Pi_k\cP_{n,d}$ (which is the convex hull of $\Pi_kX_1,\ldots, \Pi_k X_n$) leads to
\begin{align*}
\E f_j(\Pi_k\cP_{n,d}) =
2\sum_{l=0}^{\infty} \sigma \stirling{n}{k-2l}  \sigma\stirlingsec{k-2l}{j+1}.
\end{align*}
On the other hand, for the original Gaussian polytope $\cP_{n,d}$ \eqref{eq:E_f_k_P_n_d}  states that
$$
\E f_j(\cP_{n,d})
=
2\sum_{l=0}^{\infty}  \sigma\stirling{n}{d-2l} \sigma\stirlingsec{d-2l}{j+1}.
$$
Combining  these two equations with  Theorem~\ref{623} completes the proof.
\end{proof}
\begin{remark}
An alternative way to prove Theorem~\ref{2219} is to apply Corollary~3.6 in~\cite{goetze_kabluchko_zaporozhets} to the tangent cones of the polytope $\cP_{n,d}$ which can be viewed as a Gaussian projection of the regular simplex. The Grassmann angles of the regular simplex appearing in that corollary can be computed using~\eqref{eq:crofton_conic} and~\eqref{eq:eq:upsilon_reg_simplex}, below. Note also that the case when $n\leq d$ omitted in Theorem~\ref{2219} (meaning that $\cP_{n,d}$ is a simplex of dimension $n-1$ in $\R^d$), was treated in Theorem~4.1 of~\cite{goetze_kabluchko_zaporozhets}. Translated into the notation of the present paper, this result shows that~\eqref{eq:alternative_P} (but not Theorem~\ref{2219}) continues to hold under the assumptions $d\in\N$, $n\in \{2,\ldots, d\}$, $j,k\in \{0,\ldots,n-2\}$. Let us also mention that Theorem~\ref{2219} is related to Theorems~1.12 and~1.13 of~\cite{beta_polytopes}, where the expected conic intrinsic volumes of the tangent cones of the so-called beta polytopes have been computed. The Gaussian polytopes considered here can be viewed as the limiting case $\beta\to+\infty$ of the beta polytopes.
\end{remark}

\begin{remark}
All results on the polytope $\cP_{n,d}$ remain true if it is replaced by the random polytope $\cP_{n,d}'$ defined as a random projection of the regular simplex $\conv(e_1,\ldots,e_n)$ onto a random uniform $d$-dimensional  subspace in $\R^n$. Indeed, \eqref{eq:E_f_k_P_n_d} remains true for $\cP'_{n,d}$ by the original result of~\cite{AS92}, and a projection of $\cP_{n,d}'$ onto a random uniform subspace of dimension  $k < d$ has the same distribution as $\cP_{n,k}'$, so that the above proof applies.
\end{remark}

\begin{proof}[Proof of Theorem~\ref{1138thm} assuming Theorem~\ref{623}]
In the case $k\leq j$ the statement can be proven in the same way as in the proof of Theorem~\ref{2219}, but this time we have to appeal to~\eqref{800}.  In the following, let  $0\leq j < k \leq d$.
Projecting the path $S_0,\ldots,S_n$ onto the random $k$-plane $W_k$ gives a random walk in $W_k$. We can identify $W_k$ with $\R^k$. The increments of the projected random walk are given by
\begin{align*}
\xi'_1 := \Pi_k \xi_{1}, \;\;\; \ldots,\;\;\; \xi'_{n} := \Pi_k \xi_n.
\end{align*}
It is straightforward to check that the projected random walk satisfies conditions $(\text{Ex})$ and $(\text{GP})$ as well. In particular, for $(\text{GP})$ note that any $k$ vectors among $S_1,\ldots,S_n$ are a.s.\ linearly independent (since $k\leq d$ and $(\text{GP})$ holds for the original random walk), hence their projections onto an independent $k$-plane $W_k$ are also linearly independent a.s.  Therefore applying~\eqref{800} leads to
\begin{align*}
\E f_j(\Pi_k\cQ_{n,d}) = \frac{2\cdot j!}{n!} \sum_{l=0}^{\infty}\stirling{n+1}{k-2l}  \stirlingsec{k-2l}{j+1}.
\end{align*}
On the other hand, \eqref{800} applied to the original random walk states that
$$
\E f_j(\cQ_{n,d})= \frac{2\cdot j!}{n!} \sum_{l=0}^{\infty}\stirling{n+1}{d-2l}  \stirlingsec{d-2l}{j+1}.
$$
Combining these two equations  with Theorem~\ref{623} completes the proof.
\end{proof}

\subsection{Expected sums of  conic intrinsic volumes}
From the above Theorems~\ref{2219} and~\ref{1138thm} we can deduce formulas for the expected sums of conic intrinsic volumes of the tangent cones of the random polytopes $\cP_{n,d}$ and $\cQ_{n,d}$. Given a polyhedral cone $C\subset \R^d$, its $k$-th \emph{conic intrinsic volume} $\upsilon_k(C)$ is defined as
\begin{equation}\label{eq:upsilon_def}
\upsilon_k(C)	
=
\sum_{F\in\cF_k(C)}\alpha(F)\alpha(N_F(C)),
\end{equation}
for $k\in \{0,\ldots,d\}$.
There are other equivalent definitions using, for example,  the conic Steiner formula or Euclidean projections;  see~\cite[Section~6.5]{SW08}, \cite{glasauer_phd}, \cite{amelunxen_comb}, \cite{amelunxen_edge}, \cite[Section 2]{HugSchneider2016}.

\begin{theorem}\label{2219v}
Fix some $d\in\N$ and $n\geq d+1$. Then, for all $j\in\{0,\ldots,d-1\}$ and  $k\in \{j,\ldots,d-1\}$  we have
\begin{align*}
\E \sum_{F\in\cF_j(\cP_{n,d})}\upsilon_k(T_F(\cP_{n,d})) = \sigma \stirling{n}{k+1} \sigma\stirlingsec{k+1}{j+1}.
\end{align*}
In the remaining case when $j\in\{0,\ldots,d-1\}$ and  $k = d$  we have
\begin{align*}
\E \sum_{F\in\cF_j(\cP_{n,d})}\upsilon_d(T_F(\cP_{n,d}))
&=
\sum_{s=0}^\infty (-1)^{s} \sigma\stirling{n}{d-s}  \sigma \stirlingsec{d-s}{j+1}
=
\sum_{s=1}^\infty (-1)^{s+1} \sigma\stirling{n}{d+s}  \sigma \stirlingsec{d+s}{j+1}
.
\end{align*}
\end{theorem}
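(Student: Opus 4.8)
The plan is to deduce this from Theorem~\ref{2219} by expressing the conic intrinsic volumes through the Grassmann angles. The bridge is the conic Crofton formula (this is~\eqref{eq:crofton_conic} below; see also~\cite[Section~6.5]{SW08}): for a polyhedral cone $C\subset\R^d$ which is \emph{not} a linear subspace, one has $\gamma_k(C)=2\sum_{i\ge0}\upsilon_{k+1+2i}(C)$ for $k\ge0$. Combined with the normalization $\sum_{m=0}^d\upsilon_m(C)=1$, this is equivalent to
\begin{equation*}
\upsilon_k(C)=\tfrac12\bigl(\gamma_{k-1}(C)-\gamma_{k+1}(C)\bigr)\quad(1\le k\le d),\qquad \upsilon_0(C)=\tfrac12\bigl(\gamma_0(C)-\gamma_1(C)\bigr),
\end{equation*}
where $\gamma_0(C)=1$. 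For every $F\in\cF_j(\cP_{n,d})$ with $j\le d-1$, the tangent cone $T_F(\cP_{n,d})$ is full-dimensional with $j$-dimensional lineality space and is a proper cone (not a subspace), so these relations apply to it; note also that $\upsilon_k(T_F)=0$ for $k<j$, which is why only the range $k\ge j$ is recorded.

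I would then sum over $F\in\cF_j(\cP_{n,d})$, take expectations, and use linearity to get
\begin{equation*}
\E\sum_{F\in\cF_j(\cP_{n,d})}\upsilon_k(T_F(\cP_{n,d}))=\tfrac12\Bigl(\E\sum_{F\in\cF_j}\gamma_{k-1}(T_F)-\E\sum_{F\in\cF_j}\gamma_{k+1}(T_F)\Bigr).
\end{equation*}
For $1\le k\le d-1$ the two indices $k-1,k+1$ both lie in $\{0,\ldots,d\}$, so Theorem~\ref{2219} evaluates each inner sum. Abbreviating $c_m:=\sigma\stirling{n}{m}\sigma\stirlingsec{m}{j+1}$ and $A:=2\sum_{l\ge0}c_{d-2l}$ (which is independent of $m$), that theorem reads $\E\sum_{F}\gamma_m(T_F)=A-2\sum_{l\ge0}c_{m-2l}$. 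The terms $A$ cancel in the difference, leaving
\begin{equation*}
\E\sum_{F\in\cF_j}\upsilon_k(T_F)=\sum_{l\ge0}c_{k+1-2l}-\sum_{l\ge0}c_{k-1-2l}=c_{k+1}=\sigma\stirling{n}{k+1}\sigma\stirlingsec{k+1}{j+1},
\end{equation*}
since the two sums differ only by the single leading term $c_{k+1}$. The boundary value $k=0$ (possible only when $j=0$) is the same computation performed with $\upsilon_0(T_F)=\tfrac12(\gamma_0(T_F)-\gamma_1(T_F))$ and $\E\sum_F\gamma_0(T_F)=\E f_0(\cP_{n,d})=A$, which again leaves $c_1$.

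For the final case $k=d$, the same relation gives $\upsilon_d(T_F)=\tfrac12(\gamma_{d-1}(T_F)-\gamma_{d+1}(T_F))$, and here $\gamma_{d+1}(T_F)=0$ because $\dim T_F=d$. Hence, applying Theorem~\ref{2219} to $\gamma_{d-1}$,
\begin{equation*}
\E\sum_{F\in\cF_j}\upsilon_d(T_F)=\tfrac12\bigl(A-2\sum_{l\ge0}c_{d-1-2l}\bigr)=\sum_{l\ge0}c_{d-2l}-\sum_{l\ge0}c_{d-1-2l}=\sum_{s=0}^{\infty}(-1)^s c_{d-s},
\end{equation*}
which is the first asserted form. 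The second form $\sum_{s\ge1}(-1)^{s+1}c_{d+s}$ follows by flipping this alternating tail, using the Gauss--Bonnet--type identity $\sum_m(-1)^m\sigma\stirling{n}{m}\sigma\stirlingsec{m}{j+1}=0$ supplied by Lemma~\ref{lem:identity}.

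The only nonroutine input is the conic Crofton relation between $\upsilon_k$ and the Grassmann angles; once it is available (it is~\eqref{eq:crofton_conic} below, and I would either cite it or derive it from the half-tail functionals $U_k=\tfrac12\gamma_k$ recalled in Section~\ref{559}), everything else is the elementary telescoping above. The points demanding care are the two boundary conventions, $\gamma_0\equiv1$ at $k=0$ and $\gamma_{d+1}\equiv0$ at $k=d$, and the verification that each tangent cone is a proper cone rather than a linear subspace, so that the Crofton formula genuinely applies.
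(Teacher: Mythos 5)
Your proof is correct and follows essentially the same route as the paper: the linear relation $\upsilon_k=\tfrac12\bigl(\gamma_{k-1}-\gamma_{k+1}\bigr)$ obtained from the conic Crofton formula, Theorem~\ref{2219} applied twice with the telescoping cancellation yielding the single term $\sigma\stirling{n}{k+1}\sigma\stirlingsec{k+1}{j+1}$, and the alternating identity of Lemma~\ref{lem:identity} (whose Kronecker delta indeed vanishes here since $n\geq d+1>j+1$) for the second expression at $k=d$. The only cosmetic difference is the boundary case $k=j=0$: the paper settles it directly by the fact that external angles at the vertices sum to $1$, while you run the same telescoping using $\gamma_0\equiv 1$ in place of the paper's convention $\gamma_{-1}\equiv 1$ --- both are valid.
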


\begin{theorem}\label{1138v}
Fix some $d\in\N$ and $n\geq d$. Then, for all $j\in\{0,\ldots,d-1\}$ and  $k\in \{j,\ldots,d-1\}$  we have
\begin{equation*}
\E \sum_{F\in\cF_j(\cQ_{n,d})}\upsilon_k(T_F(\cQ_{n,d}))= \frac{j!}{n!}\stirling{n+1}{k+1}\stirlingsec{k+1}{j+1}.
\end{equation*}
In the remaining case when $j\in\{0,\ldots,d-1\}$ and  $k = d$  we have
\begin{align*}
\E \sum_{F\in\cF_j(\cQ_{n,d})}\upsilon_d(T_F(\cQ_{n,d}))
&=
\frac{j!}{n!}\sum_{s=0}^{\infty} (-1)^s\stirling{n+1}{d-s}\stirlingsec{d-s}{j+1}
=
\frac{j!}{n!} \sum_{s=1}^{\infty} (-1)^{s+1} \stirling{n+1}{d+s}\stirlingsec{d+s}{j+1}
.
\end{align*}
\end{theorem}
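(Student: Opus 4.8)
The plan is to reduce Theorem~\ref{1138v} to Theorem~\ref{1138thm} by writing each conic intrinsic volume as a difference of two Grassmann angles and then telescoping the resulting tail sums. The one ingredient I would record first is the linear relation between the two families of functionals. As observed after~\eqref{2256}, for a polyhedral cone $C\subset\R^d$ that is \emph{not} a linear subspace one has $\tfrac12\gamma_m(C)=U_m(C)=h_{m+1}(C)$, where $h_i(C)=\upsilon_i(C)+\upsilon_{i+2}(C)+\cdots$ is the half-tail functional. Since this says $h_i(C)=\tfrac12\gamma_{i-1}(C)$, while $\upsilon_k(C)=h_k(C)-h_{k+2}(C)$, it follows that
\begin{equation}\label{eq:ups_via_gamma}
\upsilon_k(C)=\tfrac12\bigl(\gamma_{k-1}(C)-\gamma_{k+1}(C)\bigr),\qquad k\in\{1,\ldots,d\},
\end{equation}
with the convention $\gamma_{d+1}(C)=0$ (consistent with $\gamma_N(C)=0$ for $N\ge\dim C$). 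The remaining value $k=0$, which can occur only together with $j=0$, is handled separately by $\upsilon_0(C)=\tfrac12\bigl(\gamma_0(C)-\gamma_1(C)\bigr)$; this follows from $\gamma_0(C)=1$ and $\sum_{i=0}^d\upsilon_i(C)=1$, i.e.\ from the conic Gauss--Bonnet relation $\upsilon_0+\upsilon_2+\cdots=\tfrac12$ valid for cones that are not subspaces.

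Next I would apply \eqref{eq:ups_via_gamma} pointwise to the tangent cones. For every $F\in\cF_j(\cQ_{n,d})$ with $j\le d-1$, the cone $T_F(\cQ_{n,d})$ has lineality space of dimension $j<d$ and is a full-dimensional proper cone, since $\cQ_{n,d}$ has non-empty interior a.s.\ whenever $n\ge d$ under $(\mathrm{Ex})$ and $(\mathrm{GP})$; hence it is never a linear subspace, and \eqref{eq:ups_via_gamma} applies a.s. Writing
\[
G_k:=\E\sum_{F\in\cF_j(\cQ_{n,d})}\gamma_k(T_F(\cQ_{n,d})),
\]
summing \eqref{eq:ups_via_gamma} over the a.s.\ finitely many faces and taking expectations yields, for $k\ge1$,
\begin{equation}\label{eq:ups_sum}
\E\sum_{F\in\cF_j(\cQ_{n,d})}\upsilon_k(T_F(\cQ_{n,d}))=\tfrac12\bigl(G_{k-1}-G_{k+1}\bigr),
\end{equation}
together with the analogue $\tfrac12(G_0-G_1)$ in the case $k=0$.

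Then I would insert Theorem~\ref{1138thm}. Setting $a_m:=\tfrac{j!}{n!}\stirling{n+1}{m}\stirlingsec{m}{j+1}$, so that $a_m=0$ for $m\le j$, Theorem~\ref{1138thm} reads $G_k=2\sum_{l\ge0}a_{d-2l}-2\sum_{l\ge0}a_{k-2l}$. For $k\in\{j,\ldots,d-1\}$ the tail sums $\sum_{l\ge0}a_{k+1-2l}$ and $\sum_{l\ge0}a_{k-1-2l}$ differ by the single term $a_{k+1}$, so \eqref{eq:ups_sum} telescopes to $\tfrac12(G_{k-1}-G_{k+1})=a_{k+1}$, which is exactly the asserted formula; the borderline cases $k=j$ and $k=0$ cause no trouble because $a_m=0$ for $m\le j$. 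For $k=d$ one has $\gamma_{d+1}(T_F)=0$, so \eqref{eq:ups_sum} reduces to $\tfrac12 G_{d-1}=\sum_{l\ge0}a_{d-2l}-\sum_{l\ge0}a_{d-1-2l}=\sum_{s\ge0}(-1)^s a_{d-s}$, the first of the two expressions claimed for $k=d$.

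Finally, the equality of the two expressions for $k=d$ is the purely combinatorial identity $\sum_{s\ge0}(-1)^s a_{d-s}=\sum_{s\ge1}(-1)^{s+1}a_{d+s}$, which is equivalent to $\sum_m(-1)^m\stirling{n+1}{m}\stirlingsec{m}{j+1}=0$. This is the Stirling orthogonality relation $\sum_m(-1)^{n+1-m}\stirling{n+1}{m}\stirlingsec{m}{j+1}=\ind[n=j]$, whose right-hand side vanishes since $n\ge d>j$; it is precisely the identity recorded in Lemma~\ref{lem:identity} and Remark~\ref{rem:stirling_relations}. I expect the main obstacle to be the first step, namely pinning down the correct relation \eqref{eq:ups_via_gamma} with its boundary cases $k=0$ and $k=d$ and verifying that every tangent cone $T_F(\cQ_{n,d})$ is a non-subspace; once that is settled, everything reduces to the telescoping of tail sums and this one standard Stirling identity.
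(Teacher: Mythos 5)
Your proposal is correct and follows essentially the same route as the paper: express $\upsilon_k$ via the Grassmann angles through the conic Crofton formula, apply Theorem~\ref{1138thm} twice so that the tail sums telescope to the single term $\frac{j!}{n!}\stirling{n+1}{k+1}\stirlingsec{k+1}{j+1}$ (respectively to the alternating sum for $k=d$), and derive the second $k=d$ expression from the Stirling orthogonality relation of Remark~\ref{rem:stirling_relations}. The only cosmetic difference is the boundary case $k=j=0$, which the paper settles by noting that the external angles at the vertices of any polytope sum to $1$, whereas you handle it by the same telescoping via $\upsilon_0(C)=\tfrac12\bigl(\gamma_0(C)-\gamma_1(C)\bigr)$ --- both arguments are valid.
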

Note that in both theorems, the case $k=d$ yields a formula for the expected sum of internal angles of $\cP_{n,d}$ and $\cQ_{n,d}$ already (partially) obtained in Corollaries~\ref{cor:angle_sum_P} and~\ref{cor:angle_sum_Q}. On the other extreme, taking $k=j$ and noting that $\upsilon_{j}(T_F(P)) = \alpha(N_F(P))$ for all $F\in \cF_j(P)$ because the only face of dimension $j$ in $T_F(P)$ is its lineality space (which is a shift of $\aff F$),  we obtain the following expressions for the sums of the external angles.

\begin{corollary}\label{cor_ext_angle_sum_P}
Fix some $d\in\N$ and $n\geq d+1$. Then, for every $j\in\{0,\ldots,d-1\}$  we have
\begin{align*}
\E \sum_{F\in\cF_j(\cP_{n,d})} \alpha (N_F(\cP_{n,d})) = \sigma\stirling{n}{j+1}.
\end{align*}
\end{corollary}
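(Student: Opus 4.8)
The plan is to deduce the corollary from Theorem~\ref{2219v} by specializing to $k=j$; the only points that genuinely need justification are the identity $\upsilon_j(T_F(P)) = \alpha(N_F(P))$ announced in the paragraph preceding the statement, together with the evaluation $\sigma\stirlingsec{j+1}{j+1}=1$. Everything else is bookkeeping.

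First I would establish $\upsilon_j(T_F(\cP_{n,d})) = \alpha(N_F(\cP_{n,d}))$ for every $F \in \cF_j(\cP_{n,d})$. Write $C := T_F(\cP_{n,d})$. Since $\cP_{n,d}$ has full dimension $d$ and $F$ is a genuine $j$-face, the lineality space $L := C \cap (-C)$ of the tangent cone is exactly the $j$-dimensional linear subspace parallel to $\aff F$: directions $v$ inside this subspace satisfy $v,-v \in C$ because $f_0 \in \relint F$, while no larger subspace can be contained in $C$. The key cone-theoretic fact is that every nonempty face of a polyhedral cone contains its lineality space; consequently a face of dimension exactly $j = \dim L$ must coincide with $L$, so $\cF_j(C) = \{L\}$. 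Plugging this into the definition~\eqref{eq:upsilon_def} collapses the sum to a single term,
\[
\upsilon_j(C) = \alpha(L)\,\alpha(N_L(C)),
\]
where $\alpha(L)=1$ because $L$ is a linear subspace. It remains to identify $N_L(C)$ with $N_F(\cP_{n,d})$: at a relative interior point of $L$ one has $C - x = C$ (as $C + L = C$ and $\pm x \in L \subseteq C$), so the tangent cone of $C$ at $L$ is $C$ itself, whence $N_L(C) = C^\circ = T_F(\cP_{n,d})^\circ = N_F(\cP_{n,d})$ by~\eqref{eq:def_normal_cone}. This gives $\upsilon_j(C) = \alpha(N_F(\cP_{n,d}))$.

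With the identity in hand I would sum over $F \in \cF_j(\cP_{n,d})$, take expectations, and invoke Theorem~\ref{2219v} in the admissible range $k = j \in \{j,\ldots,d-1\}$ to get
\[
\E \sum_{F\in\cF_j(\cP_{n,d})} \alpha(N_F(\cP_{n,d})) = \E \sum_{F\in\cF_j(\cP_{n,d})}\upsilon_j(T_F(\cP_{n,d})) = \sigma\stirling{n}{j+1}\,\sigma\stirlingsec{j+1}{j+1}.
\]
Finally I would evaluate $\sigma\stirlingsec{j+1}{j+1} = \alpha(T_{\Delta_{j+1}}(\Delta_{j+1})) = 1$, since the tangent cone of the simplex $\Delta_{j+1}$ at its own top face $\Delta_{j+1}$ is the $j$-dimensional linear span of $\Delta_{j+1}$, and a linear subspace has solid angle $1$; alternatively this drops out of~\eqref{eq:regular_simpl_internal}, where the integrand reduces to $2\eee^{-x^2/2}$. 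This yields the asserted value $\sigma\stirling{n}{j+1}$.

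The genuinely nontrivial point, as opposed to the routine substitution, is the pair of cone facts in the second paragraph: that the lineality space is the unique minimal (hence unique $j$-dimensional) face of $C$, and that the normal cone of $C$ at $L$ is precisely the polar cone $C^\circ$. Both are standard for polyhedral cones, but they are exactly what forces the single surviving summand of $\upsilon_j$ to reduce to the external angle $\alpha(N_F(\cP_{n,d}))$, so I would state and verify them explicitly rather than treat them as self-evident.
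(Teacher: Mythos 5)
Your proposal is correct and takes essentially the same route as the paper, which likewise obtains the corollary by setting $k=j$ in Theorem~\ref{2219v} and using the identity $\upsilon_j(T_F(P))=\alpha(N_F(P))$ (stated in the paragraph preceding the corollaries) together with the implicit evaluation $\sigma\stirlingsec{j+1}{j+1}=1$. The only difference is one of detail: your second paragraph spells out the cone-theoretic facts that the paper compresses into the parenthetical remark that the unique $j$-face of $T_F(P)$ is its lineality space.
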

\begin{corollary}\label{cor_ext_angle_sum_Q}
Fix some $d\in\N$ and $n\geq d$. Then, for every $j\in\{0,\ldots,d-1\}$  we have
\begin{equation*}
\E \sum_{F\in\cF_j(\cQ_{n,d})} \alpha(N_F(\cQ_{n,d}))= \frac{j!}{n!}\stirling{n+1}{j+1}.
\end{equation*}
\end{corollary}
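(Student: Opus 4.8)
The plan is to obtain this corollary as the endpoint case $k=j$ of Theorem~\ref{1138v}, which already computes the expected sums of the conic intrinsic volumes $\upsilon_k(T_F(\cQ_{n,d}))$. The bridge between the two statements is the pointwise identity $\upsilon_j(T_F(P)) = \alpha(N_F(P))$, valid for every $j$-dimensional face $F$ of any polyhedral set $P$; once this is established, the corollary follows immediately by summing and taking expectations.

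First I would verify this identity. For a $j$-dimensional face $F$, the tangent cone $T_F(\cQ_{n,d})$ has lineality space $L$ equal to the translate of $\aff F$ to the origin, so $\dim L = j$, and $L$ is the \emph{unique} $j$-dimensional face of the cone: every other face strictly contains the lineality space and hence has dimension $>j$. Feeding this into the definition~\eqref{eq:upsilon_def} collapses the defining sum to the single term $\upsilon_j(T_F(\cQ_{n,d})) = \alpha(L)\,\alpha(N_L(T_F(\cQ_{n,d})))$. Here $\alpha(L)=1$ because $L$ is a linear subspace, and the normal cone of a cone at its lineality space is exactly the polar cone, so by~\eqref{eq:def_normal_cone} we have $N_L(T_F(\cQ_{n,d})) = T_F^\circ(\cQ_{n,d}) = N_F(\cQ_{n,d})$. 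This yields $\upsilon_j(T_F(\cQ_{n,d})) = \alpha(N_F(\cQ_{n,d}))$.

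With the identity in hand, I would sum over all $F\in\cF_j(\cQ_{n,d})$, take expectations, and invoke Theorem~\ref{1138v} in the case $k=j$ (legitimate since $j\in\{j,\ldots,d-1\}$ as soon as $j\leq d-1$), obtaining $\frac{j!}{n!}\stirling{n+1}{j+1}\stirlingsec{j+1}{j+1}$. The final step is the trivial simplification $\stirlingsec{j+1}{j+1}=1$, which produces the claimed value $\frac{j!}{n!}\stirling{n+1}{j+1}$; this is the exact analogue of Corollary~\ref{cor_ext_angle_sum_P} in the random-walk setting.

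I do not expect a serious obstacle: the whole argument is the reduction to Theorem~\ref{1138v}, and the only point needing care is the identification $\upsilon_j(T_F(P)) = \alpha(N_F(P))$ — specifically the assertion that the lineality space is the unique $j$-face of the tangent cone and that the normal cone of $T_F(\cQ_{n,d})$ taken at that lineality space coincides with the full polytope normal cone $N_F(\cQ_{n,d})$. This is a standard fact about polyhedral cones, but it is the single place where a careless argument could slip, so I would state it explicitly rather than leave it implicit.
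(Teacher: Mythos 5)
Your proposal is correct and follows exactly the paper's route: the authors also obtain this corollary by setting $k=j$ in Theorem~\ref{1138v} and noting that $\upsilon_j(T_F(P))=\alpha(N_F(P))$ because the lineality space (a shift of $\aff F$) is the unique $j$-dimensional face of the tangent cone. Your write-up merely supplies the details the paper leaves implicit — that $\alpha(L)=1$, that $N_L(T_F(P))=T_F^\circ(P)=N_F(P)$, and that $\stirlingsec{j+1}{j+1}=1$ — all of which are correct.
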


For the proof of Theorems~\ref{2219v} and~\ref{1138v} we  use a relation, known as the \emph{conic Crofton formula},  between the Grassmann angles of a cone and its conical intrinsic volumes. Precisely, according to~\cite[p.~261]{SW08} we have
\begin{align}\label{eq:crofton_conic}
\gamma_k(C)=2\sum_{i=1,3,5,\ldots} \upsilon_{k+i}(C)
\end{align}
for every cone $C\subset \R^d$ which is not a linear subspace and for all $k\in \{0,\ldots,d\}$. Consequently,
\begin{align}\label{relation_quer_intr}
\upsilon_d(C)=\frac{1}{2}\gamma_{d-1}(C),
\;\;\;
\upsilon_k(C) = \frac{1}{2}\gamma_{k-1}(C)-\frac{1}{2}\gamma_{k+1}(C),
\end{align}
for all  $k\in\{0,\ldots,d-1\}$. Here, in the case $k=0$ we have to define $\gamma_{-1}(C) = 1$ and the proof of~\eqref{relation_quer_intr} follows from~\eqref{eq:crofton_conic} together with the identity $\upsilon_0(C) + \upsilon_2(C) +\ldots = 1/2$.

\begin{proof}[Proof of Theorem~\ref{2219v}]
Let us start by observing that in the case $k=0$ (which implies $j=0$), we can use the fact that the external angles at the vertices of any polytope sum up to $1$. This yields
$$
\E \sum_{F\in\cF_0(\cP_{n,d})}\upsilon_0(T_F(\cP_{n,d}))
=
1
=
\sigma\stirling{n}{1}\sigma\stirlingsec{1}{1},
$$
which is the desired result. In the following we exclude the case $k=j=0$.

In the general case, we can use the linear relation~\eqref{relation_quer_intr} between the Grassmann angles $\gamma_k$ and the conic intrinsic volumes $\upsilon_k$. Then, applying Theorem~\ref{2219}, it follows that for all $k\in\{j,\ldots,d-1\}$,
\begin{align*}
\E \sum_{F\in\cF_j(\cP_{n,d})}\upsilon_k(T_F(\cP_{n,d}))
&=\E \sum_{F\in\cF_j(\cP_{n,d})}\Big(\frac{1}{2}\gamma_{k-1}(T_F(\cP_{n,d}))- \frac{1}{2}\gamma_{k+1}(T_F(\cP_{n,d}))\Big)\\
&= \sum_{l=0}^{\infty} \sigma \stirling{n}{k-2l+1} \sigma \stirlingsec{k-2l+1}{j+1}- \sum_{l=0}^{\infty} \sigma \stirling{n}{k-2l-1}\sigma\stirlingsec{k-2l-1}{j+1}\\
&= \sigma \stirling{n}{k+1} \sigma \stirlingsec{k+1}{j+1}.
\end{align*}
In the case $k=d$, we get
\begin{align*}
\E \sum_{F\in\cF_j(\cP_{n,d})}\upsilon_{d}(T_F(\cP_{n,d}))&=\E \sum_{F\in\cF_j(\cP_{n,d})}\frac{1}{2}\gamma_{d-1}(T_F(\cP_{n,d}))\\
&=\sum_{l=0}^\infty       \sigma\stirling{n}{d-2l}\sigma\stirlingsec{d-2l}{j+1} - \sum_{l=0}^\infty \sigma\stirling{n}{d-1-2l}\sigma\stirlingsec{d-1-2l}{j+1}\\
&=\sum_{s=0}^\infty(-1)^s \sigma\stirling{n}{d-s} \sigma\stirlingsec{d-s}{j+1}.
\end{align*}
The second formula in the case $k=d$ follows then from the identity (see Lemma~\ref{lem:identity}, below)
$$
\sum_{m=j+1}^{n} (-1)^{n-m} \sigma \stirling{n}{m} \sigma\stirlingsec{m}{j+1} = \delta_{n,j+1}
$$
together with the observation that the Kronecker symbol on the right-hand side vanishes because $j+1 \leq d < n$.
\end{proof}

\begin{lemma}\label{lem:identity}
For all $n,k\in\N$ with $n\geq k$ we have
$$
\sum_{m=k}^{n} (-1)^{n-m} \sigma \stirling{n}{m} \sigma\stirlingsec{m}{k} = \delta_{n,k},
\;\;\;
\sum_{m=k}^{n} \sigma \stirling{n}{m} \sigma\stirlingsec{m}{k} = \binom nk.
$$
\end{lemma}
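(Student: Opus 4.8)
The plan is to reduce both identities, using the symmetry of the regular simplex, to two angle-sum relations that hold for an arbitrary polytope, and then to prove those relations by interpreting the relevant angle products as conic intrinsic volumes of a tangent cone. Concretely, I claim it suffices to establish that for any polytope $P$ and any face $F$ of $P$,
\[
\sum_{F\subseteq G\subseteq P}\beta(F,G)\,\gamma(G,P) = 1,
\qquad
\sum_{F\subseteq G\subseteq P}(-1)^{\dim G-\dim F}\,\beta(F,G)\,\gamma(G,P) = \delta_{F,P},
\]
where the sums range over faces $G$ with $F\subseteq G\subseteq P$ and $\delta_{F,P}=1$ iff $F=P$.

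To see that these suffice, fix a $k$-element set $S$ with face $\Delta_S$ and recall from Section~\ref{2320a} that $\sigma\stirling{n}{m}=\binom nm\,\alpha(N_{\Delta_m}(\Delta_n))$ and $\sigma\stirlingsec{m}{k}=\binom mk\,\alpha(T_{\Delta_k}(\Delta_m))$, where by symmetry $\beta(\Delta_S,\Delta_T)=\alpha(T_{\Delta_k}(\Delta_m))$ and $\gamma(\Delta_T,\Delta_n)=\alpha(N_{\Delta_m}(\Delta_n))$ for every $m$-element $T\supseteq S$. There are exactly $\binom{n-k}{m-k}$ such $T$, and $\binom nm\binom mk=\binom nk\binom{n-k}{m-k}$, so grouping the faces $\Delta_T$ by $m=|T|$ gives
\[
\sum_{m=k}^{n}\sigma\stirling nm\sigma\stirlingsec mk=\binom nk\!\!\sum_{\Delta_S\subseteq\Delta_T\subseteq\Delta_n}\!\!\beta(\Delta_S,\Delta_T)\gamma(\Delta_T,\Delta_n),
\quad
\sum_{m=k}^{n}(-1)^{m-k}\sigma\stirling nm\sigma\stirlingsec mk=\binom nk\!\!\sum_{\Delta_S\subseteq\Delta_T\subseteq\Delta_n}\!\!(-1)^{\dim\Delta_T-\dim\Delta_S}\beta(\Delta_S,\Delta_T)\gamma(\Delta_T,\Delta_n).
\]
The first general relation turns the first sum into $\binom nk$, giving identity two; the second relation (with $\delta_{\Delta_S,\Delta_n}=\delta_{k,n}$) turns the second sum into $\binom nk\delta_{k,n}$. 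Finally, since $(-1)^{n-m}=(-1)^{n+k}(-1)^{m-k}$, the first identity follows from $\sum_m(-1)^{n-m}\sigma\stirling nm\sigma\stirlingsec mk=(-1)^{n+k}\binom nk\delta_{k,n}=\delta_{n,k}$, the last equality because $\delta_{k,n}$ forces $k=n$, where $(-1)^{n+k}\binom nk=1$.

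To prove the two general relations, set $C:=T_F(P)$. The faces $G$ of $P$ with $F\subseteq G$ are in dimension-preserving bijection with the faces of the cone $C$ via $G\mapsto T_F(G)$, the lineality space $T_F(F)$ of dimension $\dim F$ being the minimal face; under this bijection $\beta(F,G)=\alpha(T_F(G))$ by definition, while the external angle is preserved, $\gamma(G,P)=\alpha(N_{T_F(G)}(C))$, since tangent and normal cones are local and $C$ coincides with $P$ near $F$. Writing $H=T_F(G)$ with $\dim H=\dim G$ and recalling $\upsilon_i(C)=\sum_{H\in\cF_i(C)}\alpha(H)\alpha(N_H(C))$, we obtain
\[
\sum_{F\subseteq G\subseteq P}\beta(F,G)\gamma(G,P)=\sum_{i}\upsilon_i(C),
\qquad
\sum_{F\subseteq G\subseteq P}(-1)^{\dim G-\dim F}\beta(F,G)\gamma(G,P)=(-1)^{\dim F}\sum_{i}(-1)^{i}\upsilon_i(C).
\]
The first right-hand side is $1$ because the conic intrinsic volumes sum to $1$. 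For the second, the conic Gauss--Bonnet relation gives $\sum_i(-1)^i\upsilon_i(C)=0$ when $C$ is not a linear subspace, i.e.\ when $F\neq P$; and if $F=P$ then $C$ is the subspace $\aff P-f_0$ of dimension $\dim F$, so $\sum_i(-1)^i\upsilon_i(C)=(-1)^{\dim F}$ and the product is $1$. In both cases the value is $\delta_{F,P}$, as required.

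The main obstacle is the tangent-cone face correspondence, specifically the preservation of external angles $\gamma(G,P)=\alpha(N_{T_F(G)}(C))$: one must verify that passing to the tangent cone $C=T_F(P)$ carries the entire normal-cone structure of the faces above $F$ onto that of $C$. This is standard but rarely spelled out, and I would either cite it (e.g.\ \cite[Section~2.4]{SW08}) or check it directly from \eqref{eq:def_tangent_cone}--\eqref{eq:def_normal_cone}, using that both $N_G(P)$ and $N_{T_F(G)}(C)$ are determined by the local cone of $P$ at a relative-interior point of $G$. The conic inputs are classical: $\sum_i\upsilon_i(C)=1$ is the normalization of the conic intrinsic volumes, while the Gauss--Bonnet identity $\sum_i(-1)^i\upsilon_i(C)=0$ for non-subspace cones follows from \eqref{relation_quer_intr} together with $\upsilon_0(C)+\upsilon_2(C)+\cdots=\tfrac12$.
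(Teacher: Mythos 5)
Your proof is correct and is essentially the paper's own argument: the paper likewise identifies $\frac{1}{\binom{n}{k}}\sigma\stirling{n}{m}\sigma\stirlingsec{m}{k}$ with the conic intrinsic volume $\upsilon_{m-1}(T_{\Delta_k}(\Delta_n))$ via the very same correspondence between faces of the tangent cone and $m$-vertex faces of $\Delta_n$ containing $\Delta_k$ (with internal angles becoming solid angles of the cone's faces and external angles preserved), and then applies the two identities $\sum_{m}\upsilon_{m-1}(C)=1$ and $\sum_{m}(-1)^{m}\upsilon_{m-1}(C)=0$ for cones that are not linear subspaces. Your only deviation is cosmetic packaging: you first formulate the face/angle correspondence and the resulting two angle-sum relations for an arbitrary polytope--face pair and then specialize to the regular simplex by symmetry and the binomial identity $\binom{n}{m}\binom{m}{k}=\binom{n}{k}\binom{n-k}{m-k}$, whereas the paper works with $T_{\Delta_k}(\Delta_n)$ directly.
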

\begin{proof}
To prove the identity, consider the tangent cone of the regular simplex $\Delta_n= \conv(e_1,\ldots,e_n)$ at its face $\Delta_k=\conv(e_1,\ldots,e_k)$. Its $(m-1)$-st conic intrinsic volume can be computed using formula~\eqref{eq:upsilon_def} by observing that the $(m-1)$-dimensional faces of the tangent cone correspond to the $m$-vertex faces of $\Delta_n$ containing $\Delta_k$ and that the internal (respectively, normal) angles at these faces correspond to the internal (respectively, external) angles of these  faces. Since the number of such $m$-vertex faces in $\binom {n-k}{m-k}$,  one obtains the following formula (which can be found already in~\cite{AS92}):
\begin{equation}\label{eq:eq:upsilon_reg_simplex}
\upsilon_{m-1}(T_{\Delta_k}(\Delta_n))
=
\binom {n-k}{m-k} \frac{\sigma\stirlingsec{m}{k}}{\binom mk} \frac{\sigma\stirling{n}{m}}{\binom nm}
=
\frac 1 {\binom nk}\sigma \stirling{n}{m} \sigma \stirlingsec{m}{k},
\end{equation}
for all $m\in \{k,\ldots,n\}$. Moreover, the intrinsic volumes $\upsilon_{m-1}(T_{\Delta_k}(\Delta_n))$ with $m\in \{0,\ldots, k-1\}$ vanish because all faces of $T_{\Delta_k}(\Delta_n)$ have dimension at least $k-1$. The claim of the lemma follows from the identities $\sum_{m=1}^n \upsilon_{m-1}(C) = 1$ and $\sum_{m=1}^{n}  (-1)^m \upsilon_{m-1}(C) = 0$ that are valid for every $(n-1)$-dimensional polyhedral cone $C$ which is not a linear subspace. Let us also note that Lemma~\ref{lem:identity} can be viewed as the limiting case, as $\beta\to +\infty$, of the identities for the expected angle sums of the random beta simplices stated in~\cite[Proposition~2.1]{kabluchko_algorithm}.
\end{proof}

\begin{remark}\label{rem:stirling_relations}
Identities similar to those stated in Lemma~\ref{lem:identity} are well known for Stirling numbers. Namely, for all $n,k\in\N$ with $n\geq k$, we have
$$
\sum_{m=k}^{n} (-1)^{n-m} \stirling{n}{m} \stirlingsec{m}{k} = \delta_{n,k},
\;\;\;
\sum_{m=k}^{n}  \stirling{n}{m} \stirlingsec{m}{k} = L(n,k),
$$
where $L(n,k) = \frac {n!}{k!} \binom {n-1}{k-1}$ are the Lah numbers. An even more interesting analogy between the Stirling numbers and the angles of the regular simplex is related to the identity $\stirlingsec{n}{k} = \stirling{-k}{-n}$ which becomes valid after a natural extension of the Stirling numbers to negative parameters~\cite[\S6.1]{Graham1994}.  It follows directly from~\eqref{eq:regular_simpl_external} and~\eqref{eq:regular_simpl_internal} that the individual angles of the regular simplex (rather than the angle sums $\sigma \stirling{n}{k}$ and $\sigma\stirlingsec{n}{k}$) satisfy a similar identity. Given these analogies, one may ask whether the Stirling numbers can be interpreted as angles of some polytope. This is indeed the case and it turns out that this polytope is the Schl\"afli orthoscheme. These questions will be studied in more detail elsewhere.
\end{remark}

\begin{proof}[Proof of Theorem~\ref{1138v}]
Let us first assume that $k\neq 0$.
Again, we can use the linear relation~\eqref{relation_quer_intr} and obtain for $k\in\{j,\ldots,d-1\}$,
\begin{align*}
\E \sum_{F\in\cF_j(\cQ_{n,d})}\upsilon_k(T_F(\cQ_{n,d}))
&	=\E \sum_{F\in\cF_j(\cQ_{n,d})}\Big(\frac{1}{2}\gamma_{k-1}(T_F(\cQ_{n,d}))-\frac{1}{2} \gamma_{k+1}(T_F(\cQ_{n,d}))\Big)\\
&	=\frac{j!}{n!}\sum_{l=0}^{\infty}\stirling{n+1}{k-2l+1}\stirlingsec{k-2l+1}{j+1}-\frac{j!}{n!}\sum_{l=0}^{\infty}\stirling{n+1}{k-2l-1}\stirlingsec{k-2l-1}{j+1}\\
&	=\frac{j!}{n!}\stirling{n+1}{k+1}\stirlingsec{k+1}{j+1},
\end{align*}
where we applied Theorem~\ref{1138thm} twice. 
For $k=d$, relation~\eqref{relation_quer_intr} and Theorem~\ref{1138thm} yield
\begin{align*}
\E \sum_{F\in\cF_j(\cQ_{n,d})}\upsilon_{d}(T_F(\cQ_{n,d}))
&=\E \sum_{F\in\cF_j(\cQ_{n,d})}\frac{1}{2}\gamma_{d-1}(T_F(\cQ_{n,d}))\\
&=\frac{j!}{n!}\sum_{l=0}^\infty\stirling{n+1}{d-2l}\stirlingsec{d-2l}{j+1}-\frac{j!}{n!}\sum_{l=0}^\infty\stirling{n+1}{d-1-2l}		\stirlingsec{d-1-2l}{j+1}\\
&=\frac{j!}{n!}\sum_{s=0}^\infty(-1)^s\stirling{n+1}{d-s}\stirlingsec{d-s}{j+1}.
\end{align*}
The second formula in the case $k=d$ follows then from the identity
$$
\sum_{m=j+1}^{n+1} (-1)^{n+1-m} \stirling{n+1}{m}\stirlingsec{m}{j+1} = \delta_{n,j},
$$
where the Kronecker symbol on the right hand-side vanishes  because $j\leq d-1 <n$.

In order to treat the remaining case $k=0$ (which implies that $j=0$), we make use of the fact that the sum of external angles at all vertices in any polytope is $1$, hence
$$
\E \sum_{F\in\cF_0(\cQ_{n,d})}\upsilon_0(T_F(\cQ_{n,d}))
=
1
=
\frac{0!}{n!}\stirling{n+1}{1}\stirlingsec{1}{1},
$$
which is the desired result.
\end{proof}

\subsection{Expected angle sums of Gaussian projections of polyhedral sets} \label{subsec:gauss_proj}

In this section, we are going to see that for polyhedral set $P\subset \R^n$ and  a Gaussian random matrix $A\in\R^{d\times n}$ (meaning that the entries of $A$ are independent and standard Gaussian distributed random variables), the angle sums of the so-called Gaussian projection $AP$ can be expressed in terms of the angle sums of $P$. As we shall see below, this setting includes the angle sums of Gaussian polytopes and some other interesting examples as special cases.

\begin{theorem} \label{theorem:grassmann_sums_proj_polytope}
Fix some $d\in\N$ and $n\ge d$. Let $P\subset \R^n$ be a polyhedral set with non-empty interior and let $A\in\R^{d\times n}$ be a Gaussian matrix. Then, we have
\begin{align*}
\E\sum_{F\in\cF_j(AP)}\gamma_k(T_F(AP))=\sum_{G\in\cF_j(P)}\big(\gamma_k(T_G(P))-\gamma_d(T_G(P))\big)
\end{align*}
for all $j\in\{0,\dots,d-1\}$ and $k\in\{j-1,\dots,d-1\}$ and we recall that $\gamma_{-1} (C) = 1$.
\end{theorem}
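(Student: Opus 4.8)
The plan is to reduce the assertion to the orthogonal-projection results already at our disposal, above all Theorem~\ref{623} (and, through it, Theorem~\ref{820}), and to exploit the stability of the Gaussian-projection model under orthogonal projection. Throughout, $\Pi_m$ denotes the orthogonal projection of $\R^n$ onto a uniformly distributed $m$-dimensional subspace. The first ingredient is a formula for the expected $j$-face count of a Gaussian projection. A Gaussian matrix $A\in\R^{m\times n}$ with $m\le n$ has rank $m$ a.s., and its row space $W:=(\Ker A)^\perp$ is uniform on the Grassmannian of $m$-planes (because $A\eqdistr AO$ for all $O\in O(n)$, which rotates $W$ by $O$). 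Factoring $A=(A|_W)\circ\Pi_W$ with $A|_W\colon W\to\R^m$ an isomorphism a.s., and using that linear isomorphisms preserve the face lattice and face dimensions, we obtain $f_j(AP)=f_j(\Pi_WP)$ and hence $\E f_j(AP)=\E f_j(\Pi_mP)$. Feeding this into Theorem~\ref{623} applied to $P\subset\R^n$ (valid for $0\le j<m\le n$) yields
\begin{equation}\label{eq:gauss_face_count}
\E f_j(AP)=\sum_{G\in\cF_j(P)}\bigl(1-\gamma_m(T_G(P))\bigr),\qquad A\in\R^{m\times n}\ \text{Gaussian}.
\end{equation}

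Next I would record the self-similarity of the model. If $A\in\R^{d\times n}$ is Gaussian and $\Pi_k$ is an independent uniform projection of $\R^d$ onto a $k$-plane, then, identifying that plane with $\R^k$ through an orthonormal frame $U\in\R^{k\times d}$ with $UU^\top=I_k$, we have $f_j(\Pi_k(AP))=f_j((UA)P)$; and for fixed $U$ the matrix $UA$ is again Gaussian in $\R^{k\times n}$ (each column is the image under $U$ of a standard Gaussian vector in $\R^d$, hence standard Gaussian in $\R^k$, the columns staying independent), so that $\E f_j(\Pi_k(AP))=\E f_j(A'P)$ for a Gaussian $A'\in\R^{k\times n}$. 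Now fix $0\le j<k\le d-1$. Conditionally on $A$, the set $AP\subset\R^d$ is polyhedral with non-empty interior, so Theorem~\ref{623} applies; taking expectations over $A$ and using the previous identity,
\begin{align*}
\E\sum_{F\in\cF_j(AP)}\gamma_k(T_F(AP))
&=\E f_j(AP)-\E f_j(\Pi_k(AP))\\
&=\E f_j(AP)-\E f_j(A'P).
\end{align*}
Applying~\eqref{eq:gauss_face_count} with $m=d$ and with $m=k$ (both admissible since $j<k\le d-1<d\le n$) rewrites this as $\sum_G(1-\gamma_d(T_G(P)))-\sum_G(1-\gamma_k(T_G(P)))=\sum_{G\in\cF_j(P)}(\gamma_k(T_G(P))-\gamma_d(T_G(P)))$, which is the claim.

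It remains to treat $k\in\{j-1,j\}$. For every $j$-face the tangent cone has a $j$-dimensional lineality space, so $\gamma_k(T_F(\cdot))=1$ whenever $k\le j$ by~\eqref{eq:grassmann_angles_lineality}, with the convention $\gamma_{-1}=1$. Thus the left-hand side collapses to $\E f_j(AP)$, while on the right $\gamma_k(T_G(P))=1$ for $G\in\cF_j(P)$ reduces the sum to $\sum_G(1-\gamma_d(T_G(P)))=\E f_j(AP)$ by~\eqref{eq:gauss_face_count}; the two sides coincide. The step needing the most care is the self-similarity argument: one must verify that conditioning on $A$, applying Theorem~\ref{623} (whose randomness is an \emph{internal} uniform projection of $\R^d$), and then integrating over the Gaussian $A$ genuinely collapses the two independent sources of randomness into a single Gaussian projection $A'\in\R^{k\times n}$ of lower rank. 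This is precisely the rotational invariance of the Gaussian law that makes the Perles--Shephard recursion close; the remaining steps are bookkeeping on top of Theorem~\ref{623}.
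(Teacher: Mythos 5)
Your proof is correct, but it follows a genuinely different route from the paper's. The paper argues face-by-face: it first shows that $\rank A=d$ and that $\Ker A$ is in general position with respect to $P$ almost surely, then uses Proposition~\ref{1640} to index the $j$-faces of $AP$ by $j$-faces $G$ of $P$, verifies the tangent-cone compatibility $AT_G(P)=T_{AG}(AP)$, converts the event $\{AG\in\cF_j(AP)\}$ into $\{AT_G(P)\neq\R^d\}$ via Proposition~\ref{1122}, and finally invokes the cone-level identity $\E\big[\gamma_k(AC)\,\ind_{\{AC\neq\R^d\}}\big]=\gamma_k(C)-\gamma_d(C)$ from Corollary~3.6 of~\cite{goetze_kabluchko_zaporozhets}, applied to $C=T_G(P)$. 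You avoid that external ingredient entirely and instead run the Perles--Shephard recursion of Theorem~\ref{623} twice: once conditionally on $A$, applied to the polyhedral set $AP\subset\R^d$, and once to $P\subset\R^n$ itself, after converting Gaussian images into uniform orthogonal projections via the factorization $A=(A|_{(\Ker A)^\perp})\circ\Pi_{(\Ker A)^\perp}$ with uniformly distributed row space (the Baryshnikov--Vitale argument, which the paper itself only uses for the boundary case $k=j-1$); the recursion closes because $UA$ is again Gaussian of lower rank, so $\E f_j(\Pi_k(AP))=\E f_j(A'P)$. Your argument buys economy of means -- it is exactly the scheme the paper uses to prove Theorems~\ref{2219} and~\ref{1138thm}, promoted to arbitrary polyhedral $P$ -- while the paper's face-wise argument buys more local information, namely the identity for each individual face $G$ rather than only for the sum over $\cF_j(P)$. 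Two minor points you leave implicit, both harmless: integrability when taking the expectation over $A$ of the conditional Theorem~\ref{623} identity (clear since $f_j(AP)\le|\cF(P)|<\infty$ deterministically, or $f_j(AP)\le f_j(P)$ a.s.\ by Proposition~\ref{1640}), and the degenerate case $P=\R^n$, where both sides vanish since $\cF_j(P)=\varnothing$; relatedly, in the branch $k\le j$ one should note that $T_G(P)$ is full-dimensional but not all of $\R^n$, hence not a linear subspace, so that~\eqref{eq:grassmann_angles_lineality} indeed gives $\gamma_k(T_G(P))=1$ -- which is how you use it.
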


The proof of Theorem~\ref{theorem:grassmann_sums_proj_polytope} is postponed to Section~\ref{2157}.

\begin{corollary}\label{cor:angle_sum_gauss_proj}
Fix some $d\in\N$ and $n\ge d$. Let $P\subset \R^n$ be a polyhedral set with non-empty interior and let $A\in\R^{d\times n}$ be a Gaussian matrix. Then, for all $j\in\{0,\dots,d-1\}$ and $k\in\{j,\dots,d-1\}$ we have
\begin{align*}
\E\sum_{F\in\cF_j(AP)}\upsilon_k(T_F(AP))=\sum_{G\in\cF_j(P)}\upsilon_k(T_G(P)).
\end{align*}
In the remaining case when $j\in\{0,\dots,d-1\}$ and $k=d$, we obtain
\begin{align*}
\E\sum_{F\in\cF_j(AP)}\upsilon_d(T_F(AP))
	&=\sum_{G\in\cF_j(P)}\sum_{s=0}^{n-d}(-1)^s\upsilon_{d+s}(T_G(P)).
\end{align*}
\end{corollary}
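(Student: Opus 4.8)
The plan is to derive both displayed formulas from Theorem~\ref{theorem:grassmann_sums_proj_polytope} by invoking the conic Crofton relations~\eqref{relation_quer_intr}. Since $A$ has full rank $d$ almost surely, $AP$ is a.s.\ a polyhedral set with non-empty interior in $\R^d$, so for every $F\in\cF_j(AP)$ with $j\le d-1$ the tangent cone $T_F(AP)$ is a $d$-dimensional cone which is not a linear subspace; likewise each $T_G(P)$ with $G\in\cF_j(P)$ is an $n$-dimensional cone in $\R^n$ which is not a linear subspace. Hence~\eqref{relation_quer_intr} may be applied to all of these cones. Write $a_m:=\E\sum_{F\in\cF_j(AP)}\gamma_m(T_F(AP))$. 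By Theorem~\ref{theorem:grassmann_sums_proj_polytope} we have $a_m=\sum_{G\in\cF_j(P)}\big(\gamma_m(T_G(P))-\gamma_d(T_G(P))\big)$ for all $m\in\{j-1,\dots,d-1\}$, and this identity persists trivially for $m=d$, both sides being zero (the left because $\gamma_d(T_F(AP))=0$ for a cone in $\R^d$, the right because the two terms coincide).

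First I would treat the range $k\in\{j,\dots,d-1\}$. The relation $\upsilon_k=\frac{1}{2}\gamma_{k-1}-\frac{1}{2}\gamma_{k+1}$ from~\eqref{relation_quer_intr} together with linearity of expectation gives $\E\sum_{F\in\cF_j(AP)}\upsilon_k(T_F(AP))=\frac{1}{2}(a_{k-1}-a_{k+1})$, where $a_{k+1}$ is legitimate even at the boundary $k=d-1$ by the extension noted above. Substituting the expressions for $a_{k-1}$ and $a_{k+1}$, the two correction terms $\gamma_d(T_G(P))$ cancel against each other, so the right-hand side collapses to $\frac{1}{2}\sum_G\big(\gamma_{k-1}(T_G(P))-\gamma_{k+1}(T_G(P))\big)$, which equals $\sum_G\upsilon_k(T_G(P))$ by~\eqref{relation_quer_intr}, now applied in $\R^n$. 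This proves the first formula for all $k\le d-1$.

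For the remaining case $k=d$, the relation $\upsilon_d=\frac{1}{2}\gamma_{d-1}$ yields $\E\sum_F\upsilon_d(T_F(AP))=\frac{1}{2}a_{d-1}=\frac{1}{2}\sum_G\big(\gamma_{d-1}(T_G(P))-\gamma_d(T_G(P))\big)$. It then suffices to verify the purely deterministic identity $\sum_{s=0}^{n-d}(-1)^s\upsilon_{d+s}(C)=\frac{1}{2}\gamma_{d-1}(C)-\frac{1}{2}\gamma_d(C)$ for every cone $C\subset\R^n$ that is not a linear subspace, and to apply it to $C=T_G(P)$. I would establish this by replacing each $\upsilon_{d+s}$ with $s<n-d$ by $\frac{1}{2}\gamma_{d+s-1}-\frac{1}{2}\gamma_{d+s+1}$ and the top term $\upsilon_n$ by $\frac{1}{2}\gamma_{n-1}$, and then telescoping the alternating sum: the contributions of $\gamma_i$ for $d+1\le i\le n-1$ cancel in consecutive pairs, the term $\gamma_n(C)$ vanishes since $\gamma_N(C)=0$ for $N\ge\dim C$, and what survives is exactly $\frac{1}{2}\gamma_{d-1}(C)-\frac{1}{2}\gamma_d(C)$. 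Summing over $G\in\cF_j(P)$ then completes the proof.

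All the individual calculations are routine; the only delicate points are the boundary terms. The essential mechanism is the cancellation of the $\gamma_d(T_G(P))$-correction produced by Theorem~\ref{theorem:grassmann_sums_proj_polytope}, which is precisely what reduces the answer to the clean expression $\sum_G\upsilon_k(T_G(P))$ when $k\le d-1$, while for $k=d$ the same correction is what supplies the alternating-sum form after telescoping. Thus the main obstacle is bookkeeping the boundary indices $\gamma_d(T_F(AP))$, $\gamma_d(T_G(P))$ and $\gamma_n(T_G(P))$ consistently rather than any substantial difficulty.
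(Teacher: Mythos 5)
Your proposal is correct and follows essentially the same route as the paper: Theorem~\ref{theorem:grassmann_sums_proj_polytope} combined with the Crofton relations~\eqref{relation_quer_intr}, with the $\gamma_d(T_G(P))$-corrections cancelling for $k\le d-1$. For $k=d$ the paper applies~\eqref{eq:crofton_conic} directly where you telescope~\eqref{relation_quer_intr}, which is the same computation reorganized; if anything, your explicit extension of the theorem's identity to the boundary index $m=d$ (both sides vanishing) is a point the paper leaves implicit.
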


\begin{proof}

We use Theorem~\ref{theorem:grassmann_sums_proj_polytope} and the linear relation~\eqref{relation_quer_intr} between the Grassmann angles $\gamma_k$ and the conic intrinsic volumes $\upsilon_k$. Thus, we obtain
\begin{align*}
\E\sum_{F\in\cF_j(AP)}\upsilon_k(T_F(AP))
&	=\frac{1}{2}\E\left[\sum_{F\in\cF_j(AP)}\big(\gamma_{k-1}(T_F(AP))-\gamma_{k+1}(T_F(AP))\big)\right]\\
&	=\frac{1}{2}\sum_{G\in\cF_j(P)}\big(\gamma_{k-1}(T_G(P))-\gamma_d(T_G(P))-\gamma_{k+1}(T_G(P))+\gamma_d(T_G(P))\big)\\
&	=\frac{1}{2}\sum_{G\in\cF_j(P)}\big(\gamma_{k-1}(T_G(P))-\gamma_{k+1}(T_G(P))\big)\\
&	=\sum_{G\in\cF_j(P)}\upsilon_k(T_G(P))
\end{align*}
for $k\in\{j,\dots,d-1\}$.
Note that we used that both, $T_F(AP)$ and $T_G(P)$, are not linear subspaces. To justify this, note first that  both cones are full-dimensional since $\dim P = n$ and $\dim AP = d$ (because the rank of $A$ is $d$ which we shall show in the proof of Theorem~\ref{theorem:grassmann_sums_proj_polytope}). To complete the argument, note that the lineality spaces of  $T_F(AP)$ and $T_G(P)$ have dimension $j$, which is strictly smaller than $n$ and $d$.


In the remaining case $k=d$, we use~\eqref{relation_quer_intr} combined with Theorem~\ref{theorem:grassmann_sums_proj_polytope} again and obtain
\begin{align*}
\E\sum_{F\in\cF_j(AP)}\upsilon_d(T_F(AP))
&	= \frac 12\, \E\sum_{F\in\cF_j(AP)}\gamma_{d-1}(T_F(AP))\\
&	=\sum_{G\in\cF_j(P)}\frac{1}{2}\big(\gamma_{d-1}(T_G(P))-\gamma_{d}(T_G(P))\big).
\end{align*}
Applying~\eqref{eq:crofton_conic} yields
\begin{align*}
\E\sum_{F\in\cF_j(AP)}\upsilon_d(T_F(AP))
&	=\sum_{G\in\cF_j(P)}\sum_{i=1,3,5,\dots}\big(\upsilon_{d-1+i}(T_G(P))-\upsilon_{d+i}(T_G(P))\big)\\
&	=\sum_{G\in\cF_j(P)}\sum_{s=0}^\infty(-1)^s\upsilon_{d+s}(T_G(P)),
\end{align*}
which completes the proof.
\end{proof}

Let us consider some special cases of Corollary~\ref{cor:angle_sum_gauss_proj}.
\begin{remark}
In the case $P=\conv(0,e_1,e_1+e_2,\dots,e_1+\ldots+e_n)$, where $e_1,\dots,e_n$ denotes the standard Euclidean basis vectors in $\R^n$, we observe that $AP$, for a Gaussian matrix $A\in\R^{d\times n}$, has the same distribution as the convex hull of a random walk in $\R^d$ with independent standard Gaussian increments. Using Corollary~\ref{cor:angle_sum_gauss_proj} and the formula of Theorem~\ref{1138v}, we obtain
\begin{align*}
\sum_{G\in\cF_j(P)}\upsilon_k(T_G(P))
=\E\sum_{F\in\cF_j(AP)}\upsilon_k(T_F(AP))
=\frac{j!}{n!}\stirling{n+1}{k+1}\stirlingsec{k+1}{j+1}
\end{align*}
for $0\le j\le k\le d-1$. Note that $P$ is also called the Schl\"afli orthoscheme of type $B$, which was extensively studied in~\cite{GK2020_Schlaefli_orthoschemes}. The above formula recovers Theorem 3.1 from~\cite{GK2020_Schlaefli_orthoschemes}.
\end{remark}

\begin{remark}
In the case of the regular simplex $P=\conv(e_1,e_2,\dots,e_n)$ with  $n\geq d+1$, we observe that $AP$, for a Gaussian matrix $A\in\R^{d\times n}$, has the same distribution as the Gaussian polytope $\cP_{n,d}$.
Using Corollary~\ref{cor:angle_sum_gauss_proj} and the formula of Theorem~\ref{2219v}, we obtain
\begin{align*}
\sum_{G\in\cF_j(P)}\upsilon_k(T_G(P))
=\E\sum_{F\in\cF_j(\cP_{n,d})}\upsilon_k(T_F(\cP_{n,d}))=\sigma\stirling{n}{k+1}\sigma\stirlingsec{k+1}{j+1}
\end{align*}
for $0\le j\le k\le d-1$. Since the simplex $P$ is not full-dimensional, we have to apply Corollary~\ref{cor:angle_sum_gauss_proj} with $\R^n$ replaced by the affine hull of $P$ which has dimension $n-1$.  Of course, the same argument could be used in the other direction, in which case we would recover Theorem~\ref{2219v}.
\end{remark}

\begin{remark}
For independent and standard Gaussian distributed random vectors $\xi_1,\dots,\xi_n$ with values in $\R^d$, where $n\geq d$, we define $D=\pos(\xi_1,\dots,\xi_n)$. Then, the random cone $D$ has the same distribution as $A\R^n_+$ for a Gaussian matrix $A\in\R^{d\times n}$. Thus, we obtain
\begin{align*}
\E\sum_{G\in\cF_j(D)}\upsilon_k(T_G(D))
=\sum_{F\in\cF_j(\R^n_+)}\upsilon_k(T_F(\R^n_+))=\binom{n}{j}\binom{n-j}{k-j}2^{j-n}
\end{align*}
for all $0\le j\le k\le d-1$. In order to prove the last step, we need to consider the tangent cones $T_F(\R^n_+)$ for $F\in\cF_j(\R^n_+)$. Each $j$-face $F\in\cF_j(\R^n_+)$ is determined by a collection of indices $1\le i_1<\ldots<i_{n-j}\le n$ and given by
\begin{align*}
F=\{(x_1,\dots,x_n)\in\R^n:x_{i_1}=\ldots=x_{i_{n-j}}=0,\;x_l\ge 0\; \text{ for all } l\notin\{i_1,\dots,i_{n-j}\}\}.
\end{align*}
Then, the corresponding tangent cone is given by
\begin{align*}
T_F(\R^n_+)=\{v\in\R^n:v_{i_1}\ge 0,\dots,v_{i_{n-j}}\ge 0\}
\end{align*}
which is isometric  to $\R^{n-j}_+\times\R^j$. Using the well-known formula for the conic intrinsic volumes of the orthant $\R^n_+$, see e.g.~\cite[Example~2.8]{amelunxen_comb}, we obtain
\begin{align*}
\sum_{F\in\cF_j(\R^n_+)}\upsilon_k(T_F(\R^n_+))=\sum_{F\in\cF_j(\R^n_+)}\upsilon_{k-j}(\R^{n-j}_+)
=\sum_{F\in\cF_j(\R^n_+)}\binom{n-j}{k-j}2^{j-n}=\binom{n}{j}\binom{n-j}{k-j}2^{j-n}.
\end{align*}
In the remaining case $k=d$, we get
$$
\E\sum_{G\in\cF_j(D)}\upsilon_d(T_G(D))
=\sum_{F\in\cF_j(\R^n_+)}\sum_{s=0}^{n-d}(-1)^s\upsilon_{d+s}(T_F(\R^n_+))
=
2^{j-n} \binom nj \sum_{s=0}^{n-d}(-1)^s
\binom{n-j}{d+s-j}.
$$
\end{remark}

\subsection{Invariance of angle sums under affine transformations}
In general, the solid-angle sums  of a deterministic polytope (as well as the more general sums of Grassmann angles), are not invariant under affine transformations of the ambient space. For example, for a simplex in dimension at least $3$, the sum of solid angles at vertices can take any value between $0$ and $1/2$ (see~\cite{PS67}), although all simplices can be transformed to each other by affine transformations.  The next proposition states that if the polytope is random and its law is rotationally invariant, then the expected angle-sums become affine invariant.
\begin{theorem}\label{prop:elliptic}
Let $P$ be a random polytope (or, more generally, polyhedral set) with a.s.\ non-empty interior  in $\R^d$. Assume that the law of $P$ is  invariant under orthogonal maps, that is $OP$ has the same distribution as $P$ for every deterministic orthogonal transformation $O:\R^d\to\R^d$. Let $A:\R^d\to\R^d$ be a deterministic linear map with $\det A\neq 0$. Then, for all $j\in \{0,\ldots,d-1\}$ and $k\in \{0,\ldots,d\}$,
\begin{align}
&\E \sum_{G\in \cF_j(AP)} \gamma_k(T_{G}(AP)) = \E \sum_{F\in \cF_j(P)} \gamma_k(T_{F}(P)), \label{eq:prop:elliptic1}\\
&\E \sum_{G\in \cF_j(AP)} \upsilon_k(T_{G}(AP)) = \E \sum_{F\in \cF_j(P)} \upsilon_k(T_{F}(P)). \label{eq:prop:elliptic2}
\end{align}
In the special case $k=d$, \eqref{eq:prop:elliptic2} implies that  the expected angle-sums are invariant in the sense that for all $j\in \{0,\ldots,d-1\}$:
$$
\E \sum_{G\in \cF_j(AP)} \alpha(T_{G}(AP)) = \E \sum_{F\in \cF_j(P)} \alpha(T_{F}(P)).
$$
\end{theorem}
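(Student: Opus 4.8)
The plan is to reduce the Grassmann-angle identity~\eqref{eq:prop:elliptic1} to a statement purely about expected face numbers of random projections, and then to exploit the rotational invariance of $P$ through a symmetry argument on the Grassmannian. The conic-intrinsic-volume identity~\eqref{eq:prop:elliptic2}, and the final solid-angle statement, will then follow formally from~\eqref{eq:prop:elliptic1}.

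First I would dispose of the trivial ranges of $k$. If $k\le j$, then for every $F\in\cF_j(P)$ the lineality space of $T_F(P)$ has dimension $j\ge k$, so $\gamma_k(T_F(P))=1$ by~\eqref{eq:grassmann_angles_lineality}; hence both sides of~\eqref{eq:prop:elliptic1} equal $\E f_j(P)=\E f_j(AP)$, the last equality because the linear bijection $A$ preserves the face lattice. The case $k=d$ is likewise immediate since $\gamma_d\equiv 0$. For the main range $j<k<d$, I would apply Theorem~\ref{623} to each realization of the polyhedral set and take the outer expectation over $P$, obtaining
\begin{align*}
\E \sum_{F\in\cF_j(P)}\gamma_k(T_F(P)) = \E f_j(P)-\E f_j(\Pi_k P),
\end{align*}
and the analogous relation with $P$ replaced by $AP$. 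Since $f_j(AP)=f_j(P)$ almost surely, identity~\eqref{eq:prop:elliptic1} is equivalent to the single claim
\begin{align*}
\E f_j(\Pi_k(AP)) = \E f_j(\Pi_k P),
\end{align*}
where the expectation runs over both $P$ and the independent uniform subspace $W_k$.

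The heart of the matter is this last claim. I would observe that $\Pi_k A$ is a rank-$k$ linear surjection onto $W_k$ whose kernel is the $(d-k)$-dimensional subspace $A^{-1}(W_k^\perp)$; consequently $\Pi_k(AP)$ is linearly isomorphic to the image of $P$ under \emph{any} linear surjection with that same kernel. Invoking the results on faces of linear images of polyhedral sets (valid for almost every choice of the projecting subspace, cf.\ Proposition~\ref{1640}), the $j$-face count of such a projection depends only on $P$ and on the kernel: writing $\bar f_j(P,L)$ for the number of $j$-faces of the projection of $P$ along a $(d-k)$-subspace $L$, one has $f_j(\Pi_k(AP))=\bar f_j(P,A^{-1}(W_k^\perp))$ and $f_j(\Pi_k P)=\bar f_j(P,W_k^\perp)$. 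Setting $g(L):=\E\,\bar f_j(P,L)$, I would use the equivariance $\bar f_j(OP,L)=\bar f_j(P,O^{-1}L)$ for orthogonal $O$ (again a consequence of the linear-image results, since $\ker(\pi O)=O^{-1}\ker\pi$) together with $OP\eqdistr P$ to conclude that $g$ is $O(d)$-invariant on the Grassmannian $G(d,d-k)$. As $O(d)$ acts transitively there, $g$ is constant almost everywhere; since both $A^{-1}(W_k^\perp)$ and $W_k^\perp$ are absolutely continuous with respect to the uniform distribution on $G(d,d-k)$, averaging $g$ against either law returns the same constant. This yields the claim and hence~\eqref{eq:prop:elliptic1}.

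Finally, I would deduce~\eqref{eq:prop:elliptic2} from~\eqref{eq:prop:elliptic1} by expressing $\upsilon_k=\tfrac12\gamma_{k-1}-\tfrac12\gamma_{k+1}$ and $\upsilon_d=\tfrac12\gamma_{d-1}$ via~\eqref{relation_quer_intr}; this is legitimate because every tangent cone $T_G(AP)$ and $T_F(P)$ is full-dimensional with lineality dimension $j<d$ and thus is not a linear subspace. Summing over faces and taking expectations then reduces~\eqref{eq:prop:elliptic2} termwise to~\eqref{eq:prop:elliptic1}, and the $k=d$ case gives the solid-angle statement once one notes $\alpha(T_G(AP))=\upsilon_d(T_G(AP))$ for $j$-faces with $j<d$ by~\eqref{739}. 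The step I expect to be the main obstacle is the combinatorial-invariance input of the third paragraph, namely that the $j$-face count of a projection of $P$ depends only on $P$ and the projection kernel and transforms equivariantly under linear maps; this rests on the separately established results on faces of linear images of polyhedral sets and on controlling the measure-zero set of non-generic projecting subspaces so that the ``almost everywhere constant'' value of $g$ is the one actually sampled.
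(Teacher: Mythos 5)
Your proof is correct, but it travels a genuinely different route from the paper's. The paper proves \eqref{eq:prop:elliptic1} directly at the level of individual tangent cones: for a deterministic $Q$ it writes $\gamma_k(T_{AF}(AQ))=\gamma_k(AT_F(Q))=\P[A^{-1}W_{d-k}\cap T_F(Q)\neq\{0\}]$, sums over $F\in\cF_j(Q)$, expresses the result as an integral over $G(d,d-k)$ against the law of $A^{-1}W_{d-k}$, and then uses Fubini plus the rotational invariance of the law of $P$ to conclude that the integrand $V\mapsto \E\sum_{F\in\cF_j(P)}\ind_{\{V\cap T_F(P)\neq\{0\}\}}$ is \emph{constant} on the Grassmannian, so the law of $A^{-1}W_{d-k}$ is irrelevant and one may compare with $A=\mathrm{id}$; this handles all $k\in\{0,\ldots,d\}$ uniformly and never invokes Theorem~\ref{623}. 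You instead route through Theorem~\ref{623}, reducing \eqref{eq:prop:elliptic1} (for $j<k<d$) to $\E f_j(\Pi_k(AP))=\E f_j(\Pi_k P)$, and prove that by the kernel-only dependence of face counts of linear images ($\ker(\Pi_k A)=A^{-1}(W_k^\perp)$, and two surjections with equal kernels differ by an invertible map of the target) together with the equivariance $\bar f_j(OP,L)=\bar f_j(P,O^{-1}L)$ and $OP\eqdistr P$. This buys a clean conceptual statement (expected projected face counts depend only on the law of the kernel, which symmetry makes irrelevant) at the cost of separate edge-case treatment of $k\leq j$ and $k=d$, which the paper avoids; both arguments ultimately rest on the identical symmetry input, transitivity of $O(d)$ on the Grassmannian. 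Two presentational points: your equivariance is exact and requires no genericity, so $g$ is constant \emph{everywhere}, not merely a.e.; hence your absolute-continuity step for the law of $A^{-1}(W_k^\perp)$ (true, since $L\mapsto A^{-1}L$ is a diffeomorphism of $G(d,d-k)$) is dispensable. Likewise, the appeal to Proposition~\ref{1640} for the kernel-only dependence is unnecessary -- that is pure linear algebra, and the genericity that Proposition~\ref{1640} provides is already absorbed into Theorem~\ref{623}, which you correctly apply pathwise (general position of $W_k$ holds a.s.\ for each realization of $P$). Your final deduction of \eqref{eq:prop:elliptic2} and the solid-angle case via \eqref{relation_quer_intr} and \eqref{739}, noting the tangent cones are full-dimensional with lineality dimension $j<d$ and hence not linear subspaces, matches the paper's one-line conclusion.
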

Since an arbitrary non-degenerate Gaussian distribution can be represented  as a linear image of the standard Gaussian distribution, the above theorem yields the following
\begin{corollary}
Theorems~\ref{2219}, \ref{2219v} and Corollaries~\ref{cor:angle_sum_P}, \ref{cor_ext_angle_sum_P} remain true if the points $X_1,\ldots,X_n$ generating the polytope  $\cP_{n,d}$ are sampled independently from an arbitrary non-degenerate Gaussian distribution.
\end{corollary}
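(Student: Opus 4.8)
The plan is to deduce the corollary from the invariance statement of Theorem~\ref{prop:elliptic} by writing an arbitrary non-degenerate Gaussian polytope as the image of the standard Gaussian polytope $\cP_{n,d}$ under a fixed invertible linear map followed by a translation. First I would recall that a non-degenerate Gaussian law on $\R^d$ with mean $\mu$ and positive-definite covariance $\Sigma$ can be represented as $AZ+\mu$, where $Z$ is standard Gaussian and $A$ is any invertible matrix with $AA^\top=\Sigma$ (for instance $A=\Sigma^{1/2}$), so in particular $\det A\neq 0$. Hence if $Y_1,\ldots,Y_n$ are i.i.d.\ samples from this law, realized as $Y_i=AX_i+\mu$ with $X_1,\ldots,X_n$ i.i.d.\ standard Gaussian, their convex hull satisfies
\begin{align*}
\conv(Y_1,\ldots,Y_n)=A\,\conv(X_1,\ldots,X_n)+\mu=A\cP_{n,d}+\mu.
\end{align*}

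Next I would check the hypotheses of Theorem~\ref{prop:elliptic} for the polytope $\cP_{n,d}$ and the map $A$. For orthogonal invariance, observe that for any orthogonal $O$ one has $OX_i\sim\cN(0,OO^\top)=\cN(0,I_d)$, so $(OX_1,\ldots,OX_n)\eqdistr(X_1,\ldots,X_n)$ and therefore $O\cP_{n,d}\eqdistr\cP_{n,d}$; thus the law of $\cP_{n,d}$ is invariant under orthogonal maps. The assumption $n\geq d+1$ present in Theorems~\ref{2219} and~\ref{2219v} guarantees that $\cP_{n,d}$, and hence $A\cP_{n,d}+\mu$, has non-empty interior almost surely, so all angle quantities are well defined.

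It then remains to dispose of the translation by $\mu$. Since tangent cones are defined through the local behavior of the polytope near a relative-interior point of a face, see~\eqref{eq:def_tangent_cone}, they are unaffected by a global translation: one has $T_{G+\mu}(P+\mu)=T_G(P)$ for every face $G$, and the combinatorial face structure is preserved under translation. Consequently both the Grassmann angle sums of $\gamma_k$ and the conic intrinsic volume sums of $\upsilon_k$ for $\conv(Y_1,\ldots,Y_n)$ coincide with the corresponding sums for $A\cP_{n,d}$.

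Finally I would invoke Theorem~\ref{prop:elliptic} with $P=\cP_{n,d}$, which gives, for all admissible $j$ and $k$,
\begin{align*}
\E\sum_{G\in\cF_j(A\cP_{n,d})}\gamma_k(T_G(A\cP_{n,d}))=\E\sum_{F\in\cF_j(\cP_{n,d})}\gamma_k(T_F(\cP_{n,d})),
\end{align*}
together with the analogous identity for $\upsilon_k$. Chaining this with the previous two steps shows that the expected Grassmann angle sums, conic intrinsic volume sums, and internal and external angle sums of $\conv(Y_1,\ldots,Y_n)$ agree with those of $\cP_{n,d}$, which are exactly the quantities evaluated in Theorems~\ref{2219}, \ref{2219v} and Corollaries~\ref{cor:angle_sum_P}, \ref{cor_ext_angle_sum_P}. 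The only genuinely delicate point is the translation step: one must confirm that every quantity appearing in those results---tangent and normal cones, Grassmann angles, and conic intrinsic volumes---is truly translation invariant, which is immediate from their local, cone-based definitions.
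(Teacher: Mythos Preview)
Your argument is correct and follows exactly the route the paper takes: the corollary is stated immediately after Theorem~\ref{prop:elliptic} with the one-line justification that an arbitrary non-degenerate Gaussian law is a linear image of the standard one, and you have simply spelled out the details (rotational invariance of the law of $\cP_{n,d}$, invertibility of $A$, and translation invariance of the tangent/normal cones). Your explicit treatment of the translation by $\mu$ is a welcome addition, since Theorem~\ref{prop:elliptic} is formulated only for linear maps.
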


\begin{proof}[Proof of Theorem~\ref{prop:elliptic}]
Let first $Q\subset \R^d$ be any deterministic polytope with non-empty interior and  $A:\R^d\to\R^d$ a linear map with $\det A\neq 0$. All $j$-dimensional faces of $AQ$ are of the form  $G= A F$ for some $F\in \cF_j(Q)$. The tangent cone of the polytope $AQ$ at its face $AF$ coincides with $A (T_F(Q))$. If $W_{d-k}$ denotes a random uniform $(d-k)$-plane in $\R^d$ which is independent of everything else, then the $k$-th Grassmann angle of $AQ$ at $AF$ can be written as
\begin{multline*}
\gamma_k (T_{AF}(AQ))
=
\gamma_k (A T_F(Q))
=
\P[W_{d-k} \cap A T_F(Q) \neq \{0\}]
=
\P[A^{-1} W_{d-k} \cap T_F(Q) \neq \{0\}]
\\
=
\int_{G(d,d-k)} \ind_{\{V\cap  T_F(Q) \neq \{0\}\}} \P_{A^{-1}W_{d-k}}(\dd V),
\end{multline*}
where $\P_{A^{-1}W_{d-k}}$ is the probability law of $A^{-1}W_{d-k}$ on $G(d,d-k)$, the Grassmannian of linear $(d-k)$-planes in $\R^d$.   Taking the sum over all $F\in \cF_j(Q)$, we obtain
$$
\sum_{G = AF \in \cF_j(AQ)} \gamma_k (T_{G}(AQ))
=
\int_{G(d,d-k)} \left(\sum_{F\in \cF_j(Q)} \ind_{\{V\cap  T_F(Q) \neq \{0\}\}}\right) \P_{A^{-1}W_{d-k}}(\dd V).
$$
Applying this to $Q=P$ (with $W_{d-k}$ being independent of $P$), taking the expectation, and using Fubini's theorem we get
$$
\E \sum_{G\in \cF_j(AP)} \gamma_k (T_{G}(AP))
=
\int_{G(d,d-k)} \E \left[\sum_{F\in \cF_j(P)} \ind_{\{V\cap  T_F(P) \neq \{0\}\}}\right] \P_{A^{-1}W_{d-k}}(\dd V).
$$
However, since the probability law of the random polytope $P$ is rotationally invariant, the expectation inside the integral does not depend on the choice of $V\in G(d,d-k)$ and it follows that
$$
\E \sum_{G\in \cF_j(AP)} \gamma_k (T_{G}(AP))
=
\E \left[\sum_{F\in \cF_j(P)} \ind_{\{V_0\cap  T_F(P) \neq \{0\}\}}\right],
$$
where $V_0$ is any element of $G(d,d-k)$. Observe that the right-hand side does not depend on the choice of the linear map $A$. Since we can apply the above argument to the case when $A$ is the identity map, we arrive at the identity
$$
\E \sum_{G\in \cF_j(AP)} \gamma_k (T_{G}(AP))  = \E \sum_{F\in \cF_j(P)} \gamma_k (T_{F}(P)),
$$
which proves~\eqref{eq:prop:elliptic1}. To prove~\eqref{eq:prop:elliptic2}, recall~\eqref{relation_quer_intr}.
\end{proof}

\section{Linear images of polyhedral sets}\label{631}
In this section, we prove some facts on linear images (including projections)  of polyhedral sets which will be used in the proof of Theorem~\ref{820}.
Consider a polyhedral set $P\subset \R^d$ with a non-empty interior. Take some $k\in \{1,\ldots,d\}$ and let $A:\R^d\to \R^k$ be a linear map of full rank $k$, which means that $\Ima A = \R^k$ or, equivalently, $\dim \Ker A = d-k$. We are interested in relating the faces of the polyhedral set $AP$ to the faces of the original polyhedral set $P$.
The first main result of the present section, Proposition~\ref{1640}, states that every proper face of  $AP$ is an image of some face of $P$.
However, the converse is not true: not every face of $P$ is mapped to a face of $AP$.
The second main result, Proposition~\ref{1122}, states several equivalent conditions which guarantee that the image of a face of $P$ is a face of $AP$. These results require a general position assumption on $\Ker A$ with respect to  $P$, which we are now going to state.

Let $M$ be a convex set in $\R^d$. Denote by $L$ the unique linear subspace in $\R^d$ such that for some $t\in\R^d$,
\begin{align*}
\aff M = t+ L.
\end{align*}
In other words, $L$ is the translation of the affine hull of $M$ passing through the origin.
We say that $M$ is  \textit{in general position with respect to a linear subspace} $L'\subset \R^d$ if
\begin{align*}
\dim (L \cap L') = \max(\dim L-\codim L',0).
\end{align*}
Also, we say that a linear subspace $L'\subset \R^d$ is \textit{in general position with respect to a polyhedral set} $P$ if it is in general position with respect to all faces of $P$ of all dimensions.

\begin{lemma}\label{1641}
Let $M$ be a convex  set in $\R^d$. Fix some $k\in\{1,\ldots,d\}$ and let  $A:\R^d\to\R^k$ be a linear map of full rank $k$ (that is, $\dim\Ker A=d-k$). If $\Ker A$ is in general position with respect to $M$, then
\begin{align*}
    \dim A M=\min(k,\dim M).
\end{align*}
\end{lemma}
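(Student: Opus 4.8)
The plan is to reduce the statement to a dimension count involving the kernel of $A$ and the affine hull of $M$. Let $L$ be the linear subspace with $\aff M = t + L$ for some $t$, so $\dim M = \dim L$. The key observation is that the dimension of $AM$ is governed by how $A$ acts on the affine hull of $M$: since $A$ is affine-linear (actually linear), $A(\aff M) = At + A(L)$ is an affine subspace of dimension $\dim A(L)$, and $AM$ is a full-dimensional (relative to its own affine hull) convex subset of $A(\aff M)$ because $M$ has nonempty relative interior in $\aff M$. Hence $\dim AM = \dim A(L)$, and the whole problem collapses to computing $\dim A(L) = \dim L - \dim(L \cap \Ker A)$ by the rank--nullity theorem applied to the restriction $A|_L$.

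Given this reduction, the main computation is to evaluate $\dim(L \cap \Ker A)$ using the general position hypothesis. First I would note that $\Ker A$ is a subspace of dimension $d-k$, so $\codim \Ker A = k$. The general position assumption that $\Ker A$ is in general position with respect to $M$ gives precisely
\begin{align*}
\dim(L \cap \Ker A) = \max(\dim L - \codim \Ker A, 0) = \max(\dim L - k, 0).
\end{align*}
Substituting back, $\dim AM = \dim L - \dim(L \cap \Ker A) = \dim L - \max(\dim L - k, 0)$. A short case analysis then finishes: if $\dim L \le k$ this equals $\dim L - 0 = \dim L = \dim M$, and since $\dim M \le k$ here this is $\min(k, \dim M)$; if $\dim L \ge k$ it equals $\dim L - (\dim L - k) = k = \min(k, \dim M)$. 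In both cases we obtain $\dim AM = \min(k, \dim M)$, as claimed.

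I expect the only genuinely delicate point to be the justification that $\dim AM = \dim A(L)$, i.e.\ that the image of the convex set $M$ fills out the full affine dimension of the image of its affine hull. This follows because $M$ has nonempty relative interior $\relint M$ in $\aff M$, and a linear (hence continuous, affine) map sends a relatively open subset of $\aff M$ onto a subset of $A(\aff M)$ whose affine hull is all of $A(\aff M)$; concretely, $\aff(AM) = A(\aff M)$ since $A$ is affine. Everything else is the rank--nullity identity $\dim A(L) = \dim L - \dim(\Ker(A|_L)) = \dim L - \dim(L \cap \Ker A)$ together with the definition of general position, so the proof is short once this identification is in place.
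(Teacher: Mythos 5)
Your proof is correct and follows essentially the same route as the paper: reduce to the linear subspace $L$ with $\aff M = t+L$ (using that $A$ preserves affine hulls, so $\dim AM = \dim AL$), use the general position hypothesis to get $\dim(L\cap\Ker A)=\max(\dim L-k,0)$, and conclude with a dimension count. The only cosmetic difference is that you invoke rank--nullity for the restriction $A|_L$, whereas the paper proves the same fact by hand, exhibiting $m=\min(k,\dim L)$ linearly independent vectors in $L$ whose span meets $\Ker A$ trivially and checking that their images under $A$ remain independent.
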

\begin{proof}
Let $\aff M=t+L$, where $t\in\R^d$ and $L\subset\R^d$ is a linear subspace. Since the map $A$ preserves affine and linear hulls, we have $\dim AM = \dim AL$. To prove the proposition, we need to show that
\begin{align*}
    \dim A L=m, \quad\text{where}\quad m:=\min(k,\dim L).
\end{align*}
Since $\codim\Ker A = k$, it follows from the general position assumption that
\begin{align}\label{936}
    \dim (L\cap \Ker A)=\dim L-m.
\end{align}
This implies that  there exist linearly independent vectors $e_1,\ldots,e_m\in L$ such that
\begin{align*}
    \lin(e_1,\ldots,e_m)\cap \Ker A=\{0\}.
\end{align*}
Therefore, for any tuple $(c_1,\ldots,c_m)\ne (0,\ldots,0)$,
\begin{align*}
    0\ne A(c_1e_1+\ldots+c_me_m)=c_1Ae_1+\ldots+c_mAe_m,
\end{align*}
which implies the linear independence of $Ae_1,\ldots,Ae_m$. Thus, $\dim AL\geq m$.
On the other hand, we obviously have $\dim AL\leq m$. Indeed, if some vectors are linearly dependent, then their images under $A$ are linearly dependent as well.
\end{proof}

\begin{proposition}\label{1640}
Let $P\subset\R^d$ be a polyhedral set with non-empty interior. Fix some $k\in\{1,\ldots,d\}$ and let  $A:\R^d\to\R^k$ be a linear map of full rank $k$.
\begin{enumerate}
\item[(a)] If  $F$ is a proper face of $AP$,  then $F = AG$ for a proper face $G\in \cF(P)$ with $\dim G\geq \dim F$.
\item[(b)] If, moreover,  $\Ker A$ is in general position with respect to $P$, then
     \begin{align*}
        \dim F=\dim G.
    \end{align*}
    Also, $G$ is unique in the following sense: If $G'\in \cF(P)$ satisfies $AG' = F$, then $G'=G$.
\end{enumerate}
\end{proposition}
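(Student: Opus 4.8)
The plan is to prove both parts by exploiting the standard characterization of faces of a polyhedral set via supporting hyperplanes, combined with the fact that $A$ has full rank. Recall that a proper face $F$ of $AP$ arises as $F = AP \cap H$ where $H = \{y \in \R^k : \langle y, w\rangle = c\}$ is a supporting hyperplane with $AP \subset \{y : \langle y, w\rangle \le c\}$ and $F = \{y \in AP : \langle y, w\rangle = c\}$. The first step is to pull this supporting hyperplane back through $A$: define the linear functional $\ell(x) = \langle Ax, w\rangle = \langle x, A^\top w\rangle$ on $\R^d$. Since $A$ maps $P$ into the halfspace $\{\langle \cdot, w\rangle \le c\}$, the functional $\ell$ attains its maximum $c$ over $P$, and the set $G := \{x \in P : \ell(x) = c\}$ is then a face of $P$ (the face exposed by the covector $A^\top w$). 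One checks directly that $AG = F$: the inclusion $AG \subseteq F$ is immediate, and conversely any point of $F$ equals $Ax$ for some $x \in P$ with $\langle Ax, w\rangle = c$, hence $x \in G$. This proves the existence of a face $G$ with $AG = F$ in part~(a).

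For the dimension inequality $\dim G \ge \dim F$ in part~(a), I would argue that $A$ restricts to a surjective affine map from $\aff G$ onto $\aff F$ (because $AG = F$ forces $A(\aff G) = \aff F$, as $A$ preserves affine hulls), and a surjective linear/affine map cannot increase dimension, so $\dim \aff G \ge \dim \aff F$, i.e.\ $\dim G \ge \dim F$. The properness of $G$ follows since if $G = P$ then $\ell$ would be constant on $P$, but $P$ is full-dimensional, forcing $A^\top w = 0$ and hence $w = 0$ (as $A$ has full rank, so $A^\top$ is injective), contradicting that $H$ is a genuine hyperplane.

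For part~(b), the general position hypothesis upgrades the inequality to an equality. Here I would invoke Lemma~\ref{1641} applied to the convex set $G$: since $\Ker A$ is in general position with respect to $P$, it is in general position with respect to the face $G$, so $\dim AG = \min(k, \dim G)$. Because $F = AG$ is a proper face of the $k$-dimensional set $AP$, we have $\dim F \le k-1 < k$, and combining with $\dim F = \dim AG = \min(k,\dim G)$ forces $\min(k,\dim G) = \dim G$, giving $\dim F = \dim G$ exactly. For uniqueness, suppose $G' \in \cF(P)$ also satisfies $AG' = F$. Both $G$ and $G'$ lie in the fiber $A^{-1}(\aff F)$ over the affine hull of $F$, which is an affine subspace whose direction is a complement-type translate of $\Ker A$; the general position of $\Ker A$ with respect to the faces, together with $\dim G = \dim G' = \dim F$, pins down $G$ and $G'$ as the same face. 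More concretely, I expect to show that the exposing covector is essentially unique up to the relevant normal cone, so that $G$ and $G'$ are exposed by the same functional $\ell = \langle \cdot, A^\top w\rangle$ and therefore coincide.

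The step I expect to be the main obstacle is the uniqueness claim in part~(b). Existence and the dimension count follow cleanly from the supporting-hyperplane/pullback construction and Lemma~\ref{1641}, but uniqueness requires ruling out the possibility that two distinct faces $G \ne G'$ of $P$, possibly of different ``positions'' relative to $\Ker A$, both project onto the same $F$. The cleanest route is probably to argue via the relative interiors: for a face $G$ with $AG = F$ and $\dim G = \dim F$, the restriction $A|_{\aff G} : \aff G \to \aff F$ is a bijective affine map (equal dimensions plus surjectivity), so $A$ maps $\relint G$ bijectively onto $\relint F$; then a point in $\relint F$ has a unique preimage in $\relint G$, and since distinct faces have disjoint relative interiors, any $G'$ with $A G' = F$ and $\dim G' = \dim F$ must share that preimage point, forcing $G' = G$ because a face is determined by (the face containing) any point of its relative interior. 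I would need the general position assumption precisely to guarantee $A|_{\aff G}$ is injective (equivalently $\aff G$ meets the fibers of $A$ in single points), which is exactly the content of $\dim(L_G \cap \Ker A) = \max(\dim L_G - k, 0) = 0$ when $\dim G \le k$.
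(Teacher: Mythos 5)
Your part (a) and the dimension count in part (b) are correct and essentially coincide with the paper's argument: the paper also pulls the supporting hyperplane $H$ of $AP$ back to $A^{-1}H$ (your functional $\ell(x)=\langle Ax,w\rangle$ is the same construction in different clothing), sets $G=A^{-1}H\cap P$, verifies $AG=F$, and obtains $\dim F=\dim G$ from Lemma~\ref{1641} exactly as you propose.

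The genuine gap is in the uniqueness claim of part (b), which you yourself flag as the main obstacle. Bijectivity of $A|_{\aff G}$ gives uniqueness of preimages of a point $y\in\relint F$ only \emph{within} $\aff G$; it says nothing about preimages lying in $\aff G'$. A priori the fiber $A^{-1}(y)$ is a $(d-k)$-dimensional affine subspace that could meet $\relint G$ in a point $x$ and $\relint G'$ in a \emph{different} point $x'$, and your sentence ``must share that preimage point'' is precisely the assertion that needs proof --- nothing you wrote forces $x=x'$. Note also that general position of $\Ker A$ with respect to $G$ and $G'$ separately does not immediately exclude this: the segment $[x,x']$ lies in some face $G''$ of $P$ whose dimension may exceed $k$, in which case the general position condition permits a nontrivial intersection of the direction space of $\aff G''$ with $\Ker A$. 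The paper closes uniqueness with a one-line containment that your ``exposing covector'' remark gestures at but does not carry out: since $F\subset H$, any face $G'$ with $AG'=F$ satisfies $G'\subset A^{-1}H$, hence $G'\subseteq A^{-1}H\cap P=G$; if this inclusion were proper, $G'$ would be a proper face of $G$ and so $\dim G'<\dim G$, giving $\dim F=\dim AG'\leq\dim G'<\dim G=\dim F$, a contradiction. (Observe that one does not need $G'$ to be exposed by the same functional $\ell$ --- only that $G'$ lies in the level set $\{\ell=c\}\cap P=G$.) Alternatively, $x\neq x'$ could be excluded by noting $x'-x\in T_G(P)\cap\Ker A$ and invoking the implication (a)$\Rightarrow$(f) of Proposition~\ref{1122}, but that proposition appears later in the paper, and the hyperplane containment is shorter.
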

\begin{proof}
We prove (a).
By definition of a face, there exists a supporting affine hyperplane $H\subset\R^k$ of the polyhedral set $AP$ such that
\begin{align*}
        F=H\cap AP.
    \end{align*}
Since $A$ has full rank, $A^{-1}H$ is an affine hyperplane in $\R^d$. Moreover, we claim that  $A^{-1}H$ is a supporting hyperplane of $P$. Indeed, if $H=\{y\in \R^k: \phi(y) = c\}$ and $AP\subset \{y\in \R^k: \phi(y) \geq c\}$ for some linear functional $\phi:\R^k\to\R$ (that does not vanish identically) and some constant $c\in\R$, then $A^{-1}H = \{x\in \R^d: \phi(Ax) = c\}$ and $P\subset \{x\in\R^d: \phi(Ax) \geq c\}$. To verify the last claim, assume that some $x\in P$ satisfies $\phi(Ax) <c$. But then $y:=Ax\in AP$ and $\phi(y) < c$, a contradiction. Since $x\mapsto \phi(Ax)$ is a linear functional on $\R^d$ that does not vanish identically, it follows that $A^{-1} H$ is a supporting hyperplane of $P$. It follows that
\begin{align*}
G:=(A^{-1}F)\cap P = A^{-1}(H\cap AP) \cap P = A^{-1}H\cap A^{-1}A P \cap P = A^{-1} H \cap P
\end{align*}
is a face of $P$. Let us finally check that $A G  = F$.  Clearly, $A G = A (A^{-1}F \cap P) \subset F$.  To prove the converse inclusion $F\subset AG$, take some $f\in F = H\cap AP$.
It follows that there is $p\in P$ with $f = Ap$.   Suppose that $p\notin A^{-1}H$, then $\phi(Ap)>c$. Hence, $\phi(f) > c$, which is a contradiction to $f\in F \subset  H$. We just proved that $f=Ap$ with $p\in A^{-1} H \cap P = G$. Hence, $F = AG$, which proves (a) because the map $A$ cannot increase dimension and hence $\dim F\leq \dim G$.

Statement~(b) follows directly from Lemma~\ref{1641}. Indeed, since $\dim AP = k$ and $F$ is a proper face of $AP$, we have $\dim F <k$. By Lemma~\ref{1641}, we must have $\dim F = \dim AG = \min (k, \dim G) = \dim G$. To prove the uniqueness of $G$, one can argue as follows. If $AG'=F$ and since $F\subset H$, we must have $G' \subset A^{-1}H$ and thus also $G'\subset (A^{-1}H) \cap P = G$. If $G'$ would be a proper subset of $G$, it would have a strictly smaller dimension than $\dim G$, therefore also the dimension of $F= AG'$ would be strictly smaller than $\dim G = \dim F$, which is a contradiction.
\end{proof}

\begin{proposition}\label{1122}
Let $P\subset\R^d$ be a  polyhedral set with non-empty interior. Fix some integer $0\leq j < k \leq d$. Let  $A:\R^d\to\R^k$ be a linear map of full rank $k$ such that $\Ker A$ is in general position with respect to $P$. If $F$ is a $j$-face of $P$, then  the following statements are equivalent:
\begin{enumerate}
    \item[(a)] $AF$ is a face of $AP$;
    \item[(b)] $AF$ is a $j$-face of $AP$;
    \item[(c)] $AF\cap\Int AP=\varnothing$.
    \item[(d)] $A(T_F(P)) \neq  \R^k$.
    \item[(e)] $(\Int T_F(P)) \cap \Ker A =\varnothing$.
    \item[(f)] $T_F(P) \cap \Ker A = \{0\}$.
\end{enumerate}
\end{proposition}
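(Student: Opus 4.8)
The plan is to prove the six statements equivalent by running the cycle of implications
\[
(a)\Rightarrow(b)\Rightarrow(c)\Rightarrow(d)\Rightarrow(e)\Rightarrow(f)\Rightarrow(a),
\]
most of whose links are short. Throughout I will use that $\dim AP=k$ by Lemma~\ref{1641} (since $\dim P=d\ge k$ and $\Ker A$ is in general position with respect to $P$), so that $\Int(AP)\ne\varnothing$. The one genuinely delicate step will be $(e)\Rightarrow(f)$, which is the only place where the full force of the general position hypothesis, taken with respect to \emph{all} faces of $P$ and not merely $F$, is needed.

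First the easy links. Since $\Ker A$ is in general position with respect to $F$, Lemma~\ref{1641} gives $\dim AF=\min(k,j)=j$; hence, if $AF$ is a face of $AP$ at all, it is automatically a $j$-face, which yields $(a)\Leftrightarrow(b)$, and being a proper face (as $j<k=\dim AP$) it is disjoint from $\Int(AP)$, giving $(b)\Rightarrow(c)$. For $(c)\Rightarrow(d)$ I would use the local description of $AP$ near $Af_0$, where $f_0\in\relint F$: writing $K:=A(T_F(P))$, a short argument shows $Af_0\in\Int(AP)$ if and only if $K=\R^k$ (for ``$\Leftarrow$'' pick directions $u_0,\dots,u_k$ with $0\in\Int\conv\{u_i\}$ and lift them through $A$ into $\Int T_F(P)$, using that $f_0+\varepsilon v\in P$ for $v\in T_F(P)$; for ``$\Rightarrow$'' note that any $u\in\R^k$ equals $A((p-f_0)/s)$ with $(p-f_0)/s\in T_F(P)$). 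Since $A(\relint F)=\relint(AF)$ and a convex set meets $\Int(AP)$ iff its relative interior point $Af_0$ does, the condition $(c)$ is equivalent to $K\ne\R^k$, i.e.\ to $(d)$.

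The equivalence $(d)\Leftrightarrow(e)$ is a general fact about the full-rank map $A$ and the full-dimensional cone $T_F(P)$: if $v_0\in\Int T_F(P)\cap\Ker A$, then $A$ carries a neighbourhood of $v_0$ onto a neighbourhood of $Av_0=0$, so the convex cone $K$ contains a neighbourhood of the origin and hence equals $\R^k$; conversely $K=\R^k$ forces the open convex cone $A(\Int T_F(P))$ to be all of $\R^k$, so it contains $0$, producing a point of $\Int T_F(P)\cap\Ker A$. The implication $(f)\Rightarrow(e)$ is immediate because $0\notin\Int T_F(P)$. Now the crux, $(e)\Rightarrow(f)$: assume $(e)$ and suppose some $w\ne 0$ lies in $T_F(P)\cap\Ker A$. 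Then $w\in\bd T_F(P)$, so $w$ lies in the relative interior of a proper face $\Phi$ of $T_F(P)$. Invoking the standard description of the faces of a tangent cone, namely that they are exactly the cones $T_F(G)$ with $G\in\cF(P)$, $F\subseteq G\subsetneq P$, each spanning the direction space $L_G$ of $\aff G$ and of dimension $\dim G$, I split into two cases. If $\dim G\le k$, general position with respect to $G$ forces $L_G\cap\Ker A=\{0\}$, contradicting $0\ne w\in\lin\Phi\cap\Ker A=L_G\cap\Ker A$. If $\dim G>k$, general position with respect to $G$ makes $A|_{L_G}$ surjective onto $\R^k$; since $w$ lies in the relative interior of $\Phi$ (open in $L_G$) and $Aw=0$, the image $A(\Phi)$ is a convex cone containing a neighbourhood of $0$, whence $A(\Phi)=\R^k$ and a fortiori $A(T_F(P))=\R^k$, contradicting $(e)\Leftrightarrow(d)$. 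Either way we reach a contradiction, so $(f)$ holds.

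Finally, for $(f)\Rightarrow(a)$ I would exhibit a supporting functional. By $(f)\Leftrightarrow(d)$ the polyhedral cone $K=A(T_F(P))$ satisfies $K\ne\R^k$, so its polar $K^\circ$ has positive dimension; choose $\psi$ in the relative interior of $K^\circ$. Using $(f)$ one checks that $\lin K=A(L_F)$ and that $\phi:=A^{\ast}\psi$ satisfies $\{v\in T_F(P):\langle\phi,v\rangle=0\}=L_F$, that is $\phi\in\relint N_F(P)$, so $\phi$ exposes exactly $F$ in $P$; pushing through $A$, the functional $\psi$ is maximised over $AP$ precisely on $AF$, which proves that $AF$ is a face of $AP$ and closes the cycle. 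I expect the main obstacle to be the step $(e)\Rightarrow(f)$, where one must combine the facial structure of the tangent cone with general position at the higher-dimensional faces $G\supseteq F$; every other step is routine convex geometry.
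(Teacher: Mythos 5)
Your proof is correct, and its skeleton matches the paper's (establish (a)$\Rightarrow$(b)$\Rightarrow$(c)$\Rightarrow$(d), the equivalence (d)$\Leftrightarrow$(e), then (e)$\Rightarrow$(f) and close the cycle with (f)$\Rightarrow$(a)), but two links are argued by genuinely different means. For (e)$\Rightarrow$(f), the paper simply cites Lemma~\ref{lem:intersects_boundary_intersects_interior} (whose proof is delegated to external references): a subspace in general position with respect to the linear hulls of the faces of a full-dimensional cone that meets the cone nontrivially must meet its interior. Your dichotomy on the face $\Phi=T_F(G)$ of $T_F(P)$ whose relative interior contains $w$ --- using general position at $G$ to get $L_G\cap\Ker A=\{0\}$ when $\dim G\le k$, and surjectivity of $A|_{L_G}$ plus openness to force $A(T_F(P))=\R^k$ when $\dim G>k$ --- is in substance a self-contained proof of that lemma, so your write-up is actually more complete than the paper's here; its only unproved ingredient is the standard identification of the faces of $T_F(P)$ with the cones $\pos(G-f_0)$, $F\subseteq G\subseteq P$. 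Similarly, your (c)$\Leftrightarrow$(d) and (d)$\Leftrightarrow$(e) re-prove inline what the paper delegates to Lemmas~\ref{1134} and~\ref{1146}, and these arguments are sound. For (f)$\Rightarrow$(a) the routes truly diverge: the paper runs a short primal midpoint argument (if $Af=\frac12(Ap_1+Ap_2)$, then $p:=\frac12(p_1+p_2)$ satisfies $p-f\in T_F(P)\cap\Ker A=\{0\}$, so $p=f\in F$, forcing $p_1,p_2\in F$), which needs no duality at all; your construction $\psi\in\relint K^\circ$, $\phi=A^{*}\psi$ is heavier (it invokes $K\cap\psi^\perp=K\cap(-K)$ for $\psi\in\relint K^\circ$ and $F=P\cap\aff F$) but buys the stronger conclusion that $AF$ is an \emph{exposed} face of $AP$. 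One notational slip: you write $\lin K=A(L_F)$, but since $T_F(P)$ is full-dimensional one has $\lin K=\R^k$; what you in fact prove and use --- correctly, via $v+v'\in T_F(P)\cap\Ker A=\{0\}$ --- is that the \emph{lineality space} $K\cap(-K)$ equals $A(L_F)$.
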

Before we can start with the proof we need to state some lemmas. 
\begin{lemma}\label{1146}
Let $k\in \{1,\ldots,d-1\}$. Consider some  convex cone  $C\subset \R^d$ with non-empty interior and a linear map $A:\R^{d} \to\R^k$ of full rank $k$. The following two conditions are equivalent:
\begin{enumerate}
    \item[(a)] $\Int C\cap \Ker A\ne\varnothing$;
    \item[(b)] $AC = \R^k$.
\end{enumerate}
\end{lemma}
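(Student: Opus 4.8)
The plan is to prove the two implications separately. Throughout I would use that $A$ is surjective (full rank $k$), so $\dim \Ker A = d-k$, and I would fix once and for all a linear complement $V$ of $\Ker A$ in $\R^d$, on which $A$ restricts to a linear isomorphism $A|_V\colon V\to\R^k$. This complement is the technical device that lets me turn qualitative surjectivity into the quantitative statement that $A$ maps small balls onto neighborhoods of the origin.

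For $(a)\Rightarrow(b)$, suppose $z\in\Int C\cap\Ker A$ and choose $r>0$ with $\bB_r(z)\subset C$. The key observation is that $A(\bB_r(z))$ contains an open neighborhood of the origin: the points $z+v$ with $v\in V$ and $\|v\|<r$ lie in $\bB_r(z)$, and their images $A(z+v)=A|_V(v)$ are exactly the image under the isomorphism $A|_V$ of an open ball around $0$ in $V$, hence an open neighborhood $N$ of $0$ in $\R^k$. Thus $AC\supset A(\bB_r(z))\supset N$. Since $C$ is a cone, so is $AC$, and a cone containing a neighborhood of the origin must be all of $\R^k$: for any $w\in\R^k$ one has $\varepsilon w\in N\subset AC$ for small $\varepsilon>0$, and then $w=\varepsilon^{-1}(\varepsilon w)\in AC$. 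This yields $AC=\R^k$.

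For $(b)\Rightarrow(a)$, assume $AC=\R^k$ and pick any $p\in\Int C$. Since $-Ap\in\R^k=AC$, there exists $q\in C$ with $Aq=-Ap$. I would then invoke the standard convexity fact (valid for any convex set with non-empty interior) that if $p\in\Int C$ and $q\in C\subset\cl C$, then $(1-t)p+tq\in\Int C$ for every $t\in[0,1)$. Applying it at $t=\tfrac12$ shows that the midpoint $m=\tfrac12 p+\tfrac12 q$ lies in $\Int C$, while $Am=\tfrac12 Ap+\tfrac12 Aq=0$, so $m\in\Int C\cap\Ker A$, which is exactly $(a)$.

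The cone-scaling argument and the image-of-a-ball computation are elementary; the one genuinely structural input I rely on is the convexity lemma that the half-open segment from an interior point to any point of the set stays in the interior, since this is precisely what upgrades the midpoint $m$ in $(b)\Rightarrow(a)$ from lying merely in $C$ to lying in $\Int C$. I would state this lemma explicitly and treat it as the main (standard) ingredient, after which the rest of the proof is routine.
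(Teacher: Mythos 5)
Your proof is correct and complete, but it takes a genuinely different route from the paper, which in fact does not prove this lemma internally at all: the paper's proof is a one-line citation to Lemma~5.1 of \cite{goetze_kabluchko_zaporozhets}, supplemented only by the remark that $\lin(AC)=\R^k$ because $\Int C\neq\varnothing$ and a linear surjection maps open sets to open sets. The cited result is a relative-interior criterion in the spirit of Lemma~\ref{1134} ($0\in\relint\conv M$ if and only if $\pos M=\lin M$), so the paper's route is the more general one --- it covers cones that are not full-dimensional and images $AC$ that are proper subspaces of $\R^k$ --- at the price of an external dependency, whereas your argument is elementary and self-contained, exploiting the full-dimensionality hypothesis exactly where it is available. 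Both halves of your argument check out: in (a)$\Rightarrow$(b), fixing a complement $V$ of $\Ker A$ is a hands-on substitute for the open mapping property of the surjection $A$, and the cone-scaling step is sound (with $0=Az\in AC$ disposing of $w=0$ under either convention for cones); in (b)$\Rightarrow$(a), solving $Aq=-Ap$ in $C$ and applying the standard half-open segment lemma at $t=\tfrac12$ puts $m=\tfrac12(p+q)$ in $\Int C\cap\Ker A$, and even the degenerate possibility $m=0$ is harmless, since then $0\in\Int C$ and (a) holds trivially because $0\in\Ker A$ always. One stylistic simplification: you could compress (a)$\Rightarrow$(b) by quoting directly that a surjective linear map is open, so that $A(\Int C)$ is an open neighborhood of $0=Az$ --- which is precisely the observation the paper appends to its citation.
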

\begin{proof}
See~\cite[Lemma~5.1]{goetze_kabluchko_zaporozhets}, where we have $\lin (AC) = \R^k$ because $\Int C \neq \varnothing$ and $A$, being a linear surjection,  maps open sets to open sets.
\end{proof}

\begin{lemma}\label{1134}
For  any   set  $M\subset \R^k$ the following two conditions are equivalent:
\begin{enumerate}
    \item[(a)] $0\in\relint \conv M$;
    \item[(b)] $\pos  M=\lin M$.
\end{enumerate}
\end{lemma}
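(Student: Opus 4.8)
The plan is to prove the two implications separately, using throughout the trivial inclusion $\pos M \subseteq \lin M$, so that (b) is really the assertion $\lin M \subseteq \pos M$. A preliminary observation I would record first is the interplay between $\aff M$ and $L:=\lin M$: one always has $\aff M \subseteq L$, and moreover $\aff M = L$ as soon as $0 \in \aff M$, since an affine subspace passing through the origin is linear. Conversely, if $0 \notin \aff M$, I would fix a linear functional $\phi$ that is identically $1$ on $\aff M$; such a $\phi$ exists precisely because $0 \notin \aff M$. This $\phi$ is then positive on $M$ and non-zero on $L$, and it will supply the obstruction in the degenerate case below.

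For the implication (a) $\Rightarrow$ (b) I would argue directly. If $0 \in \relint \conv M$, then $0 \in \aff M$, hence $\aff M = L$ and $\conv M$ is full-dimensional inside $L$. By definition of the relative interior, a relative ball $\bB_\varepsilon(0) \cap L$ is contained in $\conv M$, and since $\conv M \subseteq \pos M$, every sufficiently short vector of $L$ lies in $\pos M$. As $\pos M$ is closed under multiplication by non-negative scalars, scaling shows $L \subseteq \pos M$, giving $\pos M = L = \lin M$.

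For (b) $\Rightarrow$ (a) I would prove the contrapositive: if $0 \notin \relint \conv M$, then $\pos M \neq \lin M$. I distinguish two cases. If $0 \notin \aff M$, the functional $\phi$ above satisfies $\phi \geq 0$ on $M$ and hence (by linearity) $\phi \geq 0$ on all of $\pos M$, whereas $\phi$ takes negative values on $L$; thus $\pos M \neq L$. If instead $0 \in \aff M = L$ but $0 \notin \relint \conv M$, then $0$ lies on the relative boundary of, or outside, $\conv M$, and the supporting/separating hyperplane theorem applied inside $L$ produces a non-zero linear functional $\psi$ on $L$ with $\psi \geq 0$ on $\conv M \supseteq M$. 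Again $\psi \geq 0$ on $\pos M$ while $\psi < 0$ somewhere on $L$, so $\pos M \neq L = \lin M$, completing the contrapositive.

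The routine parts are the scaling argument and the implication ``$\psi \geq 0$ on $M$ $\Rightarrow$ $\psi \geq 0$ on $\pos M$'', which is immediate from linearity and the definition of the positive hull. The step requiring the most care is the bookkeeping between $\lin M$ and $\aff M$, since the relative interior in (a) is taken with respect to $\aff M$ while (b) refers to $\lin M$, and these coincide exactly when $0 \in \aff M$. The only genuinely delicate point is invoking the separation theorem when $0$ lies outside $\conv M$, because $\conv M$ need not be closed; I would circumvent this by separating $0$ from the closed convex set $\cl \conv M$, which still contains $M$, so that the resulting functional $\psi$ satisfies $\psi \geq 0$ on $M$ as required.
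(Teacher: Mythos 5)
Your proof is correct; note, however, that the paper does not actually prove Lemma~\ref{1134} in-text --- it simply cites Proposition~5.2 of \cite{goetze_kabluchko_zaporozhets} --- so your argument is a self-contained substitute rather than a replica of a written proof, and it follows the standard separation route one would expect. You correctly identify and handle the two genuinely delicate points. First, the bookkeeping between $\aff M$ and $\lin M$: your preliminary observation is sound, since writing $\aff M = t + L_0$ with $L_0$ linear and $t \notin L_0$ (when $0 \notin \aff M$), one has $\lin M = L_0 \oplus \R t$, so the functional with $\phi|_{L_0} = 0$ and $\phi(t) = 1$ exists and is $\equiv 1$ on $\aff M \supseteq M$, nonnegative on $\pos M$ by linearity, and negative at $-t \in \lin M$; this settles your Case~1. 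Second, the non-closedness of $\conv M$: separating $0$ from $\cl \conv M$ is the right fix, and in your Case~2 the supporting-hyperplane argument is legitimate because you work inside $L = \aff M$, where $\cl\conv M$ has nonempty interior relative to $L$ (as $\aff \conv M = L$); the functional $\psi$ produced is therefore automatically nonzero \emph{as a functional on $L$}, hence takes negative values on $L$, which is exactly what rules out $\pos M = L$. (Had you separated in the ambient $\R^k$, the functional could have vanished identically on $L$, so this restriction to $L$ is essential and you got it right.) The direction (a) $\Rightarrow$ (b) via the relative ball $\bB_\varepsilon(0) \cap L \subseteq \conv M \subseteq \pos M$ and positive scaling is complete. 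The only caveat is inherited from the lemma's ``any set'' phrasing rather than from your argument: your proof implicitly assumes $M \neq \varnothing$ (condition (a) forces this anyway, and your Case~1 needs a point of $M$ to produce $t$); for $M = \varnothing$ the statement is a matter of convention for $\pos\varnothing$ and $\lin\varnothing$, and is presumably excluded in the cited reference as well.
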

\begin{proof}
See~\cite[Proposition~5.2]{goetze_kabluchko_zaporozhets}.
\end{proof}

\begin{lemma}\label{lem:intersects_boundary_intersects_interior}
Let $C\subset \R^d$ be a polyhedral cone of full dimension $\dim C=d$.
Let  a proper linear subspace $S\subset \R^d$ be in general position with respect to the set of the linear hulls of its faces $\{\lin (F): F\in \cF(C) \}$.
If $S$ intersects $C$, then it also intersects its interior $\Int C$, i.e.
		$$
		S\cap C \neq \varnothing \Rightarrow S\cap \Int C \neq \varnothing.
		$$
\end{lemma}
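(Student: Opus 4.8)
The plan is to reduce the statement to a local assertion at a single boundary point and then invoke Lemma~\ref{1146}. First observe that $0\in S\cap C$ always, so the only substantive reading of the hypothesis is that $S\cap C$ contains a point $x\neq 0$; I shall show that such an $x$ forces $S\cap\Int C\neq\varnothing$ (when $S\cap C=\{0\}$ there is nothing beyond the trivial common point to argue). If $x\in\Int C$ we are done at once, so assume $x\in\partial C$ and let $F\in\cF(C)$ be the unique face with $x\in\relint F$. Since $x\neq 0$ we have $\dim F=:j\geq 1$, and $F$ is proper, so $1\le j\le d-1$.

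Next I would pass from $C$ to its tangent cone $T:=T_F(C)$ at $F$, see~\eqref{eq:def_tangent_cone}. This is again a full-dimensional polyhedral cone, and its lineality space is exactly $L:=\lin F$ (the standard local description of a polyhedral set at a point of $\relint F$; recall $\aff F=\lin F$ since $0\in F$), so in particular $L\subseteq T$. The point of this reduction is that, unlike $C$ itself, the cone $T$ contains the whole subspace $L$, which is what lets the general-position hypothesis take effect. Write $s:=\codim S$ and let $P:=\operatorname{proj}_{S^\perp}\colon\R^d\to S^\perp$, a surjection of rank $s$ with $\Ker P=S$. The key computation is a dimension count: applied to the face $F$, the general-position assumption gives
\[
\dim(S\cap L)=\max(\dim F-\codim S,0)=\max(j-s,0),
\]
and since $x\in(S\cap L)\setminus\{0\}$ this is at least $1$, hence equals $j-s$ and forces $1\le s\le j-1\le d-2$. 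Consequently
\[
\dim P(L)=\dim L-\dim(L\cap\Ker P)=j-(j-s)=s=\dim S^\perp,
\]
so $P(L)=S^\perp$. Because $L\subseteq T$ this yields $P(T)\supseteq P(L)=S^\perp$, i.e.\ $P(T)=S^\perp\cong\R^{s}$. Now Lemma~\ref{1146}, applied to the full-dimensional cone $T$ and the full-rank map $P$ (with $s\in\{1,\dots,d-1\}$), gives $\Int T\cap\Ker P\neq\varnothing$, that is $S\cap\Int T_F(C)\neq\varnothing$.

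Finally I would transport this back to $C$. Pick $v\in S\cap\Int T_F(C)$. By the local structure of the polyhedral set $C$ near $x\in\relint F$, namely $C\cap\bB_\delta(x)=(x+T_F(C))\cap\bB_\delta(x)$ for some $\delta>0$, a direction $v\in\Int T_F(C)$ satisfies $x+\varepsilon v\in\Int C$ for all sufficiently small $\varepsilon>0$. Since $x,v\in S$ and $S$ is a linear subspace, $x+\varepsilon v\in S$ as well, so $x+\varepsilon v\in S\cap\Int C$, completing the argument. The main obstacle is not any single estimate but correctly setting up the reduction: one must (i) notice that the origin is a spurious common point and work with a genuine nonzero $x$ (otherwise the conclusion fails, as the first quadrant in $\R^2$ met by a generic line through the second/fourth quadrants shows); (ii) replace $C$ by the tangent cone so that the subspace $L=\lin F$ is actually contained in the cone — this is precisely where full-dimensionality and general position combine to make $P(T)=S^\perp$; and (iii) justify the passage from $\Int T_F(C)$ back to $\Int C$ via the local conical structure of polyhedral sets. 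The three facts about tangent cones invoked here (full-dimensionality, lineality space $\lin F$, and the local description) are standard and can be cited from the polyhedral-geometry preliminaries.
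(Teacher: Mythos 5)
Your proof is correct, and it necessarily differs from the paper's, because the paper contains no proof of this lemma at all: it simply refers the reader to the proof of Lemma~3.5 in \cite{KVZ15} and to \cite[Lemma~5.7]{kabluchko_seidel}, where the statement is established by different means. Your argument has the merit of being self-contained within the paper's own toolkit: you localize at a nonzero point $x\in S\cap\partial C$, pass to the tangent cone $T=T_F(C)$ at the unique face $F$ with $x\in\relint F$ (note you only need general position for this single face, a weaker hypothesis than stated), use $x\in(S\cap\lin F)\setminus\{0\}$ to turn the general-position identity $\dim(S\cap\lin F)=\max(j-s,0)$ into $j-s\ge 1$, conclude $P(\lin F)=S^\perp$ and hence $P(T)=S^\perp$ for the projection $P$ with $\Ker P=S$, invoke Lemma~\ref{1146} in the direction (b)$\Rightarrow$(a) to get $v\in S\cap\Int T$, and transport back via the local identity $C\cap\bB_\delta(x)=(x+T)\cap\bB_\delta(x)$. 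All steps check: the derived chain $1\le s\le j-1\le d-2$ keeps you inside the hypothesis $k\in\{1,\dots,d-1\}$ of Lemma~\ref{1146}; when that chain is infeasible (e.g.\ $d=2$), the general-position identity simply shows no nonzero point of $S$ can lie on a proper face, so the first branch $x\in\Int C$ applies and the boundary case is vacuous; and your use of $\dim F\ge 1$ is legitimate since every face of a cone contains the origin, so $\{0\}$ is the only $0$-dimensional face. Your preliminary observation is also worth keeping: as literally written the lemma is degenerate, since $0\in S\cap C$ always holds while the quadrant example shows $S\cap\Int C$ can be empty, so the substantive statement is $S\cap C\neq\{0\}\Rightarrow S\cap\Int C\neq\varnothing$ --- exactly the form needed for the equivalence of (e) and (f) in Proposition~\ref{1122}, and exactly what you prove. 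The only imported facts (full-dimensionality of $T_F(C)$, lineality space $\lin F$, the local conical description) are standard and are also used without proof elsewhere in the paper, so this is a legitimate, arguably preferable, replacement for the external citation.
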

\begin{proof}
See the proof of Lemma~3.5 in~\cite{KVZ15} or~\cite[Lemma~5.7]{kabluchko_seidel} (which is stated for polytopes but is true for arbitrary polyhedral sets).
\end{proof}

\begin{proof}[Proof of Proposition~\ref{1122}]
Note that $\dim AP = k$.  It follows from Lemma~\ref{1641} that (a) implies (b), and obviously (b) implies (c).

Let us prove that (c) implies (d). Take an arbitrary $f\in\relint F$. Then, by (c) we have $Af \notin \Int AP$ and hence $0\notin\Int A(P-f)$. Therefore, making use of Lemma~\ref{1134} and taking into account that the set $A(P-f)$ is convex and has non-empty interior, we have $\pos  A(P-f) \neq \R^k$. But then
\[
A (T_F(P)) = A\pos  (P-f)=\pos  A(P-f)\neq \R^k,
\]
thus proving (d).

The equivalence of (d), (e) and (f) is stated in Lemmas~\ref{1146} and~\ref{lem:intersects_boundary_intersects_interior}.  It remains to prove that (f) implies (a). So, let
\begin{equation}\label{eq:T_F_P_Ker_A}
T_F(P) \cap \Ker A = \{0\}.
\end{equation}
Since $A$ preserves convexity, $AF$ is a convex subset of $AP$. To prove that $AF$ is a face of $AP$ it suffices to prove the following statement (which is, in fact, a definition of a face; see~\cite[p.~18]{schneider_book_brunn_mink}): If $x = Af\in AF$ can be represented as $x= \frac 12 (x_1+x_2)$ for some $x_1,x_2\in AP$, then $x_1,x_2\in AF$.  Write $x_1= Ap_1$ and $x_2 = Ap_2$ for some $p_1,p_2\in P$.   Then, $x = Ap$ with $p := \frac 12(p_1+p_2)\in P$.
So, $x=Ap =Af$ with $f\in F$ and $p\in P$. We claim that this implies that $p=f$. This can be verified as follows. On the one hand, we have $p-f\in \Ker A$ because $A(p-f) = Ap - Af = x-x = 0$. On the other hand, we have $p-f = (p-f_0) +(f_0-f) \in T_F(P)$, where $f_0\in \relint F$ is arbitrary and we have used that $p-f_0 \in T_F(P)$ be the definition of the tangent cone and that $f_0 - f$ belongs to $f_0-\aff F$, which is the lineality space of $T_F(P)$. To summarize, $p-f \in T_F(P) \cap \Ker A$, hence $p=f$ by~\eqref{eq:T_F_P_Ker_A}.

So, $p=f\in F$. But since $F$ is a face of $P$ and $p=\frac 12 (p_1+p_2)$, we must have $p_1\in F$ and $p_2 \in F$. It follows that $x_1= Ap_1\in AF$ and $x_2= Ap_2\in AF$, thus proving the claim.
\end{proof}

\section{Proofs of Theorems~\ref{820},~\ref{623} and~\ref{theorem:grassmann_sums_proj_polytope}} \label{2157}

\begin{proof}[Proof of Theorem~\ref{820}]
By the definition of the Grassmann angles, see~\eqref{1138}, we have
\begin{align*}
\gamma_k(T_F(P)) = \P[T_F(P) \cap W_{d-k}\ne \{0\}].
\end{align*}
Applying Proposition~\ref{1122} (in particular, the equivalence between (a) and (f)) in the setting when $A$ is the  orthogonal projection $\Pi_{W_{d-k}^\perp}$ on $W_{d-k}^\bot$ (which we identify with $\R^{k}$), we arrive at
\begin{align*}
\gamma_k(T_F(P))
=
\P\big[\Pi_{W_{d-k}^\perp} F \not\in \cF(\Pi_{W_{d-k}^\perp}P)\big].
\end{align*}
Note that the general position assumption of Proposition~\ref{1122} is fulfilled with probability $1$ for the random linear subspace $\Ker A = W_{d-k}$; see \cite[Lemma 13.2.1]{SW08}.
Observing that  $W_{d-k}^\perp$ has the same distribution as $W_k$, we arrive at
$$
\gamma_k(T_F(P))
=
\P\big[\Pi_{W_{k}} F \not\in \cF(\Pi_{W_{k}}P)\big]
=
\P\big[\Pi_{k} F \not\in \cF(\Pi_{k}P)\big].
$$
To show that $\cF(\Pi_{k}P)$ can be replaced by $\cF_j(\Pi_{k}P)$ on the right-hand side, we can use the same argument as above, but this time appeal to the equivalence of (b) and (f) in Proposition~\ref{1122}.
\end{proof}

\begin{proof}[Proof of Theorem~\ref{623}]
Taking the sum of~\eqref{eq:gamma_k_proof} over all $F\in \cF_j(P)$, we obtain
$$
\sum_{F\in \cF_j(P)} \gamma_k(T_F(P))
=
\sum_{F\in \cF_j(P)} \P\big[\Pi_{k} F \not\in \cF_j(\Pi_{k}P)\big]
=
f_j(P) - \sum_{G\in \cF_j(P)} \P\big[\Pi_{k} G \in \cF_j(\Pi_{k}P)\big].
$$
Writing the probabilities as the expectations of the corresponding indicator functions, we can rewrite this as
$$
\sum_{F\in \cF_j(P)} \gamma_k(T_F(P))
=
f_j(P) - \E \sum_{G\in \cF_j(P)} \ind\{\Pi_{k} G \in \cF_j(\Pi_{k}P)\}.
$$
According to Proposition~\ref{1640} every $j$-face of $\Pi_{k}P$ is of the form $\Pi_k G$ for some unique $G\in \cF_j(P)$. Thus, the sum on the right-hand side equals  $f_j(\Pi_k P)$ and  we arrive at
$$
\sum_{F\in \cF_j(P)} \gamma_k(T_F(P))
=
f_j(P) - \E f_j(\Pi_k P),
$$
thus proving the claim.
\end{proof}

\begin{proof}[Proof of Theorem~\ref{theorem:grassmann_sums_proj_polytope}]
Take $d,n\in\N$ satisfying $n\ge d$. Furthermore, let $A\in\R^{d\times n}$ be a Gaussian random matrix, let $P\subset \R^n$ be a polyhedral set with non-empty interior and let $j\in\{0,\dots,d-1\}$ and $k\in\{j-1,\dots,d-1\}$ be given. At first, we are going to show that $\ker A$ is in general position with respect to $P$ a.s.~and that $\rank A=d$ holds with probability $1$.
In order to prove that $\rank  A=d$ a.s., we will show that the row-vectors $\xi_1,\dots,\xi_d$ of $A$ are linearly dependent with probability $0$. Note that $\xi_1,\dots,\xi_d$ are independent and $n$-dimensional standard Gaussian distributed. Then, we have
\begin{align*}
&\P[\xi_1,\dots,\xi_d\text{ are linearly dependent}]\\
&	\quad\le d\cdot \P[\xi_1\in\lin\{\xi_2,\dots,\xi_d\}]\\
&	\quad=d\cdot \int_{(\R^n)^{d-1}}\P(\xi_1\in\lin\{x_2,\dots,x_d\}|\xi_2=x_2,\dots,\xi_d=x_d)\P_{(\xi_2,\dots,\xi_d)}(\text{d}(x_2,\dots,x_d))\\
&	\quad=d\cdot \int_{(\R^n)^{d-1}}\P(\xi_1\in\lin\{x_2,\dots,x_d\})\P_{(\xi_2,\dots,\xi_d)}(\text{d}(x_2,\dots,x_d))\\
&	\quad=0,
\end{align*}
since $\dim\lin\{x_2,\dots,x_d\}\le d-1<n$. Note that $\P_{(\xi_2,\dots,\xi_d)}$ denotes the joint Gaussian probability law of $(\xi_2,\dots,\xi_d)$. Thus, $\rank A=d$ holds true with probability 1.
Furthermore, $\ker A$ is in general position to every subspace $L\subset\R^n$, following~\cite[Lemma 13.2.1]{SW08}, since $\ker A$ is invariant under rotations and therefore has the uniform distribution on the Grassmannian of all $(n-d)$-dimensional subspaces.

Assume first that $k\in\{j,\dots,d-1\}$, meaning that the case $k=j-1$ is postponed.  Using Proposition~\ref{1640}, we obtain
\begin{align*}
\E\sum_{F\in\cF_j(AP)}\gamma_k(T_F(AP))
&	=\E\sum_{G\in\cF_j(P)}\gamma_k(T_{AG}(AP))\ind_{\{AG\in\cF_j(AP)\}}.
\end{align*}
We claim that $AT_G(P)=T_{AG}(AP)$ holds for a face $G\in\cF_j(P)$ provided its projection is also a face $AG\in\cF_j(AP)$. In order to prove this, note that the tangent cone $T_G(P)$ can equivalently be defined as $\pos (P-g)$ for any point $g\in\relint G$. Since $A$ is a linear mapping, we obtain
\begin{align*}
AT_G(P)=A\pos(P-g)=\pos(A(P-g))=\pos(AP-Ag).
\end{align*}
It is left to show that $Ag$ lies in the relative interior of $AG$. Obviously, we know that $Ag\in AG$. Now suppose $Ag\notin \relint AG$, that is, it lies in a face of $AP$ of dimension smaller than $j$ (which is the dimension of $AG$). Due to Proposition~\ref{1640}, this implies that $g$ lies in a face of $P$ of smaller dimension than $j$ which is a contradiction to $g\in\relint G$. Thus, $Ag\in\relint AG$ and we have $AT_G(P)=\pos(AP-Ag)=T_{AG}(AP)$, which yields
\begin{align*}
\E\sum_{F\in\cF_j(AP)}\gamma_k(T_F(AP))
&	=\E\sum_{G\in\cF_j(P)}\gamma_k(AT_{G}(P))\ind_{\{AG\in\cF_j(AP)\}}.
\end{align*}

Proposition~\ref{1122} implies that $AG\in\cF_j(AP)$ is equivalent to $AT_G(P)\neq\R^d$ since $\rank A=d$ and $\ker A$ is in general position with respect to $P$ a.s. Therefore, we arrive at
\begin{align}\label{eq:123}
\E\sum_{F\in\cF_j(AP)}\gamma_k(T_F(AP))
&=	\E\sum_{G\in\cF_j(P)}\gamma_k(AT_{G}(P))\ind_{\{AT_G(P)\neq\R^d\}}\notag\\
&=	\sum_{G\in\cF_j(P)}\E\big[\gamma_k(AT_{G}(P))\ind_{\{AT_G(P)\neq\R^d\}}\big].
\end{align}
From \cite[Corollary~3.6]{goetze_kabluchko_zaporozhets}, we obtain that
\begin{align*}
\E\big[\gamma_k(AT_{G}(P))\ind_{\{AT_G(P)\neq\R^d\}}\big]=\gamma_k(T_G(P))-\gamma_d(T_G(P)).
\end{align*}
Note that we used that $T_G(P)$ is not a linear subspace because $P$ is not a linear subspace (otherwise we would have $P=\R^n$ since $\Int P\neq \varnothing$, and the statement of the theorem would become empty).  Applying this to~\eqref{eq:123} yields
\begin{align*}
\E\sum_{F\in\cF_j(AP)}\gamma_k(T_F(AP))=\sum_{G\in\cF_j(P)}\big(\gamma_k(T_G(P))-\gamma_d(T_G(P))\big),
\end{align*}
which is the required formula.
To complete the proof, note that in the case $k=j-1$ the required formula reduces (using that $\gamma_{-1}(C)  = 1$) to the identity
$$
\E f_j(AP) = f_j(P) - \sum_{G\in \cF_j(P)} \gamma_d(T_G(P)).
$$
This identity is verified by Theorem~\ref{623}. The fact that the orthogonal projection  can be replaced by the Gaussian projection follows from the argument of~\cite{BV94}.
\end{proof}

\bibliographystyle{plainnat}
\bibliography{bib}

\end{document}